\newcommand{\dr}{\mathbb}
\renewcommand\d{\textnormal}
\newcommand\bG{{\bf G}}
\newcommand\G{\Gamma}
\newcommand\g{\gamma}
\newcommand\La{\Lambda}
\newcommand\la{\lambda}
\newcommand\GG{{G / \G}}
\newcommand\GH{{G^f  \backslash G}}
\newenvironment{fait}{\begin{enonce}{Fait}}{\end{enonce}}
\title[\'Equidistribution des matrices de d\'enominateur $n$]{Existence et
\'equidistribution des matrices de d\'enominateur
$n$ dans les groupes unitaires et orthogonaux}
\author{Antonin Guilloux}
\address{D\'epartement de Math\'ematiques et
Applications\\ \'Ecole Normale Sup\'erieure\\45 rue d'Ulm\\75005 Paris\\
France.}
\email{antonin.guilloux@ens.fr}
\urladdr{http://www.dma.ens.fr/{\lower.7ex\hbox{\~{}}}aguillou/}
\date{4 septembre 2007}
\begin{document}

\begin{altabstract}
Let $\bG$ be a simply-connected $\dr Q$ quasisimple and $\dr
R$ anisotropic algebraic
$\dr Q$-group. Let $\dr A^f$ be the finite part of the ad\`eles $\dr A$ of $\dr
Q$. Let $(H_n)$ be a sequence of bounded subsets of
${\bf G}(\dr A^f)$ which are bi-invariant by a compact open subgroup of ${\bf
G}(\dr A^f)$. Let $\G_n$ be the projection in ${\bf G}(\dr R)$ of the sets ${\bf
G}(\dr Q) \cap ({\bf G}(\dr R)\times H_n)$. Suppose that the volume of the
compact subsets ${\bf G}(\dr R)\times H_n$ tends to $\infty$ with $n$. We prove
the equidistribution in ${\bf G}(\dr R)$ of the $\G_n$ with respect to
the Haar probability on ${\bf G}(\dr R)$. The strategy is to use a mixing
result for the action of $\bG(\dr A)$ on the space $L^2(\bG(\dr A)/\bG(\dr Q))$.
As an application, we study the existence and the repartition of
rational unitary matrices having a given denominator. We prove a local-global
principle for this problem and the equirepartition of the sets of denominator
$n$-matrices when they are not empty. Then we study the more complicated case of
non simply-connected groups applying it to quadratic forms.
\end{altabstract}
\maketitle

\section{Introduction}

La th\'eorie des formes quadratiques d\'efinies positives \`a coefficients
entiers r\'epond de
mani\`ere satisfaisante aux deux questions suivantes :
\begin{itemize}
 \item \`A quelles conditions une forme quadratique donn\'ee repr\'esente un
entier $n$ (c'est-\`a-dire qu'il existe un vecteur entier de norme $\sqrt{n}$) ?
 \item Quand un entier $n$ est repr\'esent\'e, quelle est la r\'epartition des
vecteurs entiers sur l'ellipso\"ide des vecteurs de norme $\sqrt{n}$ ?
\end{itemize}
Citons les r\'esultats les plus simples, qui sont obtenus quand le rang de la
forme quadratique est au moins $5$ :

\begin{theo}[W. Tartakowsky \cite{tartakowsky}, C. Pommerenke
\cite{pommerenke}]
Soit $q$ une forme quadratique d\'efinie positive de rang $k \geq 5$ \`a
coefficients
entiers. Alors il existe un entier $N_0$ tel que pour tout $n\geq N_0$, on a
l'\'equivalence entre les deux assertions suivantes :
\begin{enumerate}
\item Pour tout nombre premier $p$, l'entier $n$ appartient \`a $q(\dr Z_p^k)$.
\item L'entier $n$ appartient \`a $q(\dr Z^k)$.
\end{enumerate}

De plus, l'ensemble des vecteurs $v$ de $\dr Z^k$ v\'erifiant $q(v)=n$
s'\'equir\'epartit sur l'ellipso\"ide $q(x)=n$ quand $n$ tend vers l'infini.
\end{theo}

Nous reviendrons dans la partie \ref{orth} sur ce th\'eor\`eme et sur le cas des
formes de petit rang. Nous nous int\'eressons dans ce texte \`a un analogue
dans le cadre des groupes unitaires ou orthogonaux de ce r\'esultat.

\subsection{Matrices de d\'enominateur $n$ dans le groupe unitaire}

Pr\'esentons maintenant nos r\'esultats
dans le cas unitaire (pour le cas orthogonal, on renvoie \`a nouveau \`a la
partie \ref{orth}). Pour tout entier $k\geq 2$ et tout anneau $A$, on note
$\d{M}(k,A)$ l'ensemble des matrices carr\'ees de taille $k\times k$ \`a
coefficients dans $A$.
D\'efinissons le d\'enominateur d'une matrice \`a coefficients dans $\dr Q[i]$
:

\begin{defi}
Soient $k$ un entier et $A$ une matrice de $\d{M}(k,\dr Q[i])$.

Le d\'enominateur $d$ de $A$ est d\'efini comme le plus petit entier $d\in \dr
N^*$ tel que $d A$ soit une matrice de $\d{M}(k,\dr Z[i])$.
\end{defi}

On fixe $k \geq 2$. Soient $H\in \d{M}(k,\dr Z[i])$ une
matrice hermitienne d\'efinie positive, $h$ la forme hermitienne associ\'ee.
Nous voulons comprendre le comportement de l'ensemble des matrices de
$\d{SU}(h,\dr Q)$ de d\'enominateur $n$ : \`a quelles conditions cet ensemble
est
non vide, et dans ce cas, quelle est sa r\'epartition dans le groupe
$\d{SU}(h,\dr R)$. On peut reformuler le probl\`eme de la fa\c con suivante :
 $A$ d\'esigne un des anneaux $\dr Z$ ou $\dr Z_p$ (pour $p$ premier), et
$A_i=\dr
Z[i] \otimes_{\dr Z} A$.
Pour tout entier $n$, on note $\mathcal T(n,H,A)$ l'ensemble des
matrices $M\in \d{M}(k,A_i)$ de d\'eterminant $n^k$ telles que :
\begin{itemize}
\item les coefficients de $M$ sont premiers entre eux,
\item la matrice $M$ est solution de l'\'equation $(E_n)$ : $M^*HM= n^2 H$.
\end{itemize}

Dans le cas $A=\dr Z$ et pour tout entier $n$, une matrice $M$
est dans $\mathcal T(n,H,\dr Z)$ si et seulement si la matrice $\frac{1}{n}M$
est un \'el\'ement de $\d{SU}(h,\dr Q)$ de d\'enominateur $n$. De m\^eme dans le
cas
$A=\dr Z_p$, une matrice $M$ est dans $\mathcal T(n,H,\dr Z_p)$ si et seulement
si la matrice $\frac{1}{n}M$ est un \'el\'ement de $\d{SU}(h,\dr Q_p)$ tel que
le
supremum de la norme $p$-adique des coefficients soit la norme $p$-adique de
$\frac{1}{n}$.

On note enfin $\mathcal U(H,A)$ l'ensemble
des $n \in \dr Z$ tels qu'il existe $M \in \mathcal T(n,H,A)$, et
$\displaystyle \mathcal U_l(H)=\bigcap_{p \textrm{ premier}} \mathcal U(H,\dr
Z_p)$.
Bien s\^ur, pour qu'il existe des matrices de d\'enominateur $n$ dans
$\d{SU}(h,\dr
Q)$, il faut que $n$ soit dans $\mathcal U(H,\dr Z)$, et donc il faut que $n$
soit dans $\mathcal U_l(H)$.

Le th\'eor\`eme suivant assure que pour $n$ suffisamment grand, c'est la seule
condition :

\begin{theo}\label{coro:existenceuni}
Soient $k \geq 2$, $H\in \d{M}(k,\dr Z[i])$ une
matrice hermitienne d\'efinie positive. Alors il existe $N_0 \in \dr N$ tel
que pour tout $n\geq N_0$, les deux assertions suivantes sont \'equivalentes :
\begin{enumerate}
\item L'entier $n$ appartient \`a $\mathcal U_l(H)$.
\item L'entier $n$ appartient \`a $\mathcal U(H,\dr Z)$.
\end{enumerate}
\end{theo}

\subsection{\'Equir\'epartition des matrices rationnelles}

La m\'ethode pour prouver ce th\'eor\`eme est de prouver un r\'esultat plus
fort, \`a savoir l'\'equir\'epartition dans $\d{SU}(h,\dr R)$ de l'ensemble
$\G_n$ des matrices de d\'enominateur $n$, quand $n$ tend vers l'infini dans
$\mathcal U_l(H)$. Voici l'\'enonc\'e :

\begin{theo}\label{the:equiuni}
Soient $k \geq 2$, $H\in \d{M}(k,\dr Z[i])$ une
matrice hermitienne d\'efinie positive et $h$ la forme hermitienne
associ\'ee. Notons $\mu$ la probabilit\'e de Haar sur $\d{SU}(h,\dr R)$. Pour
tout entier $n$, soit $\G_n$ l'ensemble des $ \frac{1}{n}M \textrm{ pour
} M\in \mathcal T(n,H,\dr Z)$.

Alors quand $n$ tend vers l'infini dans $\mathcal U_l(H)$, les $\G_n$
s'\'equir\'epartissent dans $\d{SU}(h,\dr R)$, c'est-\`a-dire qu'on la
convergence :

$$\frac{1}{\d{Card} (\G_n)} \sum_{\g \in \G_n} \delta_{\g} \xrightarrow[n
\in \mathcal U_l(H)]{n \to \infty} \mu$$
dans l'espace des probabilit\'es sur $\d{SU}(h,\dr R)$ muni de la topologie
faible-$\ast$.
\end{theo}

\begin{rema}
Pour v\'erifier la condition $n \in \mathcal U_l(H)$, il suffit de v\'erifier
que l'ensemble $\mathcal T(n,H,\dr Z_p)$ est non vide pour les
nombres premiers $p$ divisant $n$. En effet, si $p$ ne divise pas $n$, la
matrice $n\cdot Id$ appartient \`a $\mathcal T(n,H,\dr Z_p)$.
\end{rema}

La question de la r\'epartition des points rationnels de
d\'enominateur $n$ dans le
groupe des points r\'eels d'un groupe alg\'ebrique $\bG$ d\'efini sur $\dr Q$
quand $n$ tend vers l'infini a d\'ej\`a \'et\'e \'etudi\'ee par plusieurs
auteurs.

Dans le cas o\`u $\bG(\dr R)$ est non-compact, A. Eskin et H. Oh
\cite{eskin-oh}
ont d\'emontr\'e que ces points \'etaient \'equidistribu\'es suivant la mesure
de
Haar de $\bG(\dr R)$. Pour cela ils utilisent la pr\'esence de
sous-groupes unipotents dans $\bG (\dr R)$ et concluent gr\^ace \`a des
th\'eor\`emes de Ratner et Dani-Margulis. Cependant, dans le cas o\`u $\bG (\dr
R)$ est compact, il n'y a pas d'\'el\'ement unipotent dans $\bG (\dr R)$, donc
on ne peut
pas appliquer ces th\'eor\`emes.

Une autre m\'ethode pour prouver des th\'eor\`emes
d'\'equir\'epartition est d'utiliser le m\'elange. On renvoie \`a l'article
d'A. Eskin et C. McMullen \cite{Eskin-MacMullen} pour une pr\'esentation
tr\`es
claire de cette m\'ethode. Pour pouvoir l'utiliser dans notre cas,
il faut disposer d'un r\'esultat de d\'ecroissance des coefficients de l'action
de $\bG$ sur $L^2 \left( \bG(\dr A) / \bG(\dr Q) \right)$ (dans
ce cadre $\bG(\dr Q)$ est un r\'eseau du groupe des points sur les ad\`eles
$\bG(\dr A)$). De tels r\'esultats
sont prouv\'es dans l'article de L. Clozel, H. Oh et E. Ullmo
\cite{clozel-oh-ullmo},
 et compl\'et\'es dans un article de L. Clozel \cite{clozel}, puis de
A. Gorodnik,
F. Maucourant et H. Oh \cite{gorodnik-oh-maucourant} o\`u une d\'ecroissance
des coefficients de l'action de $\bG$ sur $L^2 \left( \bG(\dr A) / \bG(\dr Q)
\right)$ est prouv\'ee sous les hypoth\`eses que $\bG$ est un groupe
alg\'ebrique d\'efini sur un corps de nombres, connexe et absolument
presque-simple (on renvoie au th\'eor\`eme \ref{the:dec} pour l'\'enonc\'e
exact).

C'est ce dernier r\'esultat que nous utiliserons. Commen\c cons par rappeler
quelques r\'esultats sur les
groupes alg\'ebriques et leurs r\'eseaux arithm\'etiques, ce qui permettra de
fixer les notations.

\subsection{Notations}\label{notations}

Nous d\'efinissons dans cette partie les notations dont nous nous servirons
dans ce texte. Il nous faudra pour cela faire appel \`a des r\'esultats sur
les groupes alg\'ebriques et ad\'eliques. Pour leur preuve, nous
renvoyons le lecteur d'une part  \`a l'article \cite{Tits} de J. Tits (et
ses r\'ef\'erences) pour les r\'esultats sp\'ecifiques aux points sur $\dr
Q_p$ d'un groupe alg\'ebrique, et d'autre part au livre de V. Platonov et
A. Rapinchuk \cite{platonov-rapinchuk} pour les propri\'et\'es ad\'eliques.

Fixons une fois pour toutes un
groupe ${\bf G}$ d\'efini sur un corps de nombres $K$, connexe,
presque-$K$-simple (c'est-\`a-dire que tout sous-$K$-groupe distingu\'e de $\bG$
est fini). Soit $\mathcal V$ l'ensemble des
places de $K$. On note $\dr A$ (resp. $\dr A^f$, resp. $\dr A^{\infty}$)
l'anneau des ad\`eles de $K$
(resp. des ad\`eles finies, resp. infinies). On note de plus
$$G={\bf G} (\dr A)\d{ ; }G^f={\bf G}(\dr A^f)\textrm{ et }G^{\infty}={\bf G}
(\dr
A^{\infty}).$$ On supposera toujours que $G^\infty$ est compact.

Nous disposons alors du sous-groupe  ${\bf
G}(K)$. On rappelle que c'est un r\'eseau de $G$, qu'il est
irr\'eductible car ${\bf G}$ est presque-$K$-simple ; et enfin
qu'il est cocompact car ${\bf G}$ est $K$-anisotrope.

On appelle r\'eseau arithm\'etique
de $G$ tout sous-groupe $\G$ tel que $\G \cap{\bf G}(K)$ est d'indice fini
dans $\G$ et dans ${\bf G}(K)$.
D'apr\`es ce qui pr\'ec\`ede, tout r\'eseau arithm\'etique $\G$ de $G$ est
irr\'eductible et cocompact. On se fixe un tel r\'eseau $\G$,
ainsi qu'un sous-groupe compact ouvert $U$ de $G^f$.

On dira qu'une suite d'\'el\'ements $(g_n)$ de $G$ tend vers l'infini si pour
toute partie compacte $C$ de $G$, pour $n$ suffisamment grand, $g_n$
n'appartient pas \`a
$C$.

On note $\tau^{\infty}$ la projection de $G$
sur $G^{\infty}$,
$\tau^f$ la projection de $G$ sur $G^f$, et enfin $\pi$ la projection de
$G$ sur $\GG$. De plus on note $\lambda$ la mesure de Haar sur $G^f$
normalis\'ee par $\lambda(U)=1$ ; $\mu$ la probabilit\'e de Haar sur
$G^{\infty}$. On note enfin $m$ la probabilit\'e sur $\GG$ localement
proportionnelle \`a
$\mu\otimes \lambda$ et on l'appelle
probabilit\'e de Haar sur $\GG$. Le
diagramme ci-dessous r\'esume ces donn\'ees :

$$
\begin{matrix}
& & G \textrm{ , } \mu \otimes \lambda & &\\
& & & & \\
  & \tau^{\infty} \swarrow   & \tau^f \downarrow &  \searrow \pi &     \\
& & & &\\
G^{\infty}=\GH \textrm{ , }\mu & & G^f \textrm{ , } \la  & & \GG \textrm{ , }
m\\

\end{matrix}
$$

Nous utilisons les notations suivantes pour les fonctions caract\'eristiques
et les masses de Dirac dans un ensemble $A$ : si $a$ est un \'el\'ement de $A$,
on note $\delta_a$ la masse de Dirac en $a$ ; si $B$ est une partie de $A$, on
note ${\bf 1}_B$ la fonction caract\'eristique de $B$.

Enfin, pour les applications, on supposera toujours fix\'ee une base sur $\dr
Z^k$ et $\dr Z[i]^k$. Ainsi, nous supposons fix\'ee une fois pour toutes
l'identification entre formes quadratiques (resp. hermitiennes) et matrices
sym\'etriques (resp. hermitiennes).

\subsection{Application de la d\'ecroissance des coefficients}

On remarque qu'une matrice est de d\'enominateur $n$ si et seulement si pour
tout nombre premier $p$, le maximum de la norme $p$-adique de ses coefficients
est la norme $p$-adique de $\frac{1}{n}$. Donc on peut r\'e\'enoncer
notre probl\`eme comme un probl\`eme de r\'epartition dans $G^\infty$ de
sous-ensembles de $\G$ d\'efinis par certaines conditions sur leur projection
dans $G^f$. C'est dor\'enavant sous cet angle que nous travaillerons.

Nous prouvons dans ce cadre le r\'esultat d'\'equir\'epartition suivant
(rappelons que $U$ est un sous-groupe compact ouvert fix\'e de $G^f$) : pour une
suite
d'ensembles $H_n \subset G^f$ bi-$U$-invariants, notons $\G_n$ l'ensemble des
points de $\G$ dont la projection dans $G^f$ appartient \`a $H_n$. Alors, si le
cardinal des $\G_n$ tend vers l'infini, ils s'\'equir\'epartissent dans
$G^\infty$ vers la mesure de Haar sur $G^\infty$.

C'est l'objet du th\'eor\`eme suivant (pour lequel on utilise les notations
d\'efinies en \ref{notations}) :

\begin{theo}\label{the:appl}
Soit ${\bf G}$ un $K$-groupe, presque-$K$-simple, connexe tel que le
compl\'et\'e aux places archim\'ediennes $G^{\infty}$ est compact.
Soient $U$ un sous-groupe compact ouvert du groupe des ad\`eles finies $G^f$ et
$(H_n)$ une suite de sous-ensembles compacts bi-$U$-invariants de $G^f$.
Soit $\G$ un sous-groupe arithm\'etique du groupe des ad\`eles $G$ et $\G_n=\G
\cap (G^{\infty}\times H_n)$.
Supposons que $\d{Card}(\G_n)$ tende vers l'infini.

Alors, la projection de $\G_n$ dans $G^\infty$ s'\'equir\'epartit dans
$G^\infty$ ; c'est-\`a-dire qu'on a la convergence :
$$\lim_{n\to \infty} \frac{1}{\d{Card} (\G_n)} \sum_{\g \in
\G_n} \delta_{\tau^{\infty}(\g)} = \mu \;.$$
dans l'espace des
probabilit\'es sur $G^{\infty}$.
\end{theo}

Nous obtiendrons avec le th\'eor\`eme \ref{the:appl2} un \'equivalent
de $\d{Card}(\G_n)$. Ce th\'eor\`eme sera
prouv\'e dans la partie \ref{thm}. De plus, on notera toujours
$G_n=G^\infty\times H_n$.

\bigskip

\emph{Remerciements :} L'auteur tient \`a remercier Y. Benoist pour les
nombreuses discussions et les nombreux conseils qu'il lui a prodigu\'es.
L'auteur remercie \'egalement le referee pour l'attention accord\'ee \`a
ce travail et les int\'eressantes remarques formul\'ees.

\section{D\'ecroissance des coefficients}

\subsection{Le th\'eor\`eme de d\'ecroissance}

Nous pr\'esentons dans cette partie le th\'eor\`eme de A. Gorodnik, F.
Maucourant et H. Oh. Pour cela, il nous
faut comprendre la repr\'esentation de $G$ dans l'espace $L^2(\GG)$.

On note $\langle ,\rangle $
le produit scalaire canonique dans $L^2(\GG)$ et $g.f$ l'action de $g \in G$ sur
une fonction $f$ de $L^2(\GG)$, donn\'ee par $(g.f)(x\G) =f(g^{-1}x\G)$.
Consid\'erons $\La$ l'ensemble des
caract\`eres unitaires de $G$ triviaux sur $\G$. Ils forment une base du
sous-espace vectoriel de $L^2(\GG)$ engendr\'e par les sous-repr\'esentations
de dimension 1 de $G$. Nous notons $L^2_0(\GG)$ l'orthogonal de ce sous-espace
dans $L^2(\GG)$.

\begin{rema}
Dans \cite{gorodnik-oh-maucourant}, la repr\'esentation de $G$ consid\'er\'ee
est la repr\'esentation $\la$ de $G$ dans $L^2(\G \backslash G)$ donn\'ee par
$(\lambda(g)\varphi)(\G x)=\varphi(\G xg)$.

Consid\'erons l'isom\'etrie $\Psi$ entre $L^2(\GG)$ et $L^2(\G \backslash G)$
donn\'ee par l'\'egalit\'e : $\Psi(f)(\G x)= f(x^{-1} \G)$. Alors, on a pour
tout $f$ dans
$L^2(\GG)$ et $g$ dans $G$, $\la(g)\Psi(f)=\Psi(g.f)$. Cette \'egalit\'e nous
permet d'utiliser les r\'esultats de \cite{gorodnik-oh-maucourant} dans notre
cas.
\end{rema}

Nous pouvons maintenant citer le th\'eor\`eme de A. Gorodnik, F. Maucourant et
H. Oh \cite[Corollaire 1.20]{gorodnik-oh-maucourant} :1

\begin{theo}\label{the:dec}
Soit $\bG$ un groupe d\'efini sur un corps de nombres $K$, connexe et
absolument presque-simple. Alors pour toutes fonctions $f$ et $h$ de
$L^2_0(\GG)$, on a :

$$|\langle f,g.h\rangle | \xrightarrow{g \to \infty} 0$$
\end{theo}

Remarquons que dans \cite{gorodnik-oh-maucourant}, le produit scalaire
est major\'e gr\^ace \`a une fonction $\bar \xi$ construite de mani\`ere
explicite. Nous n'utiliserons pas ici cette estim\'ee. Comme ceci permet de
simplifier la preuve, nous en donnons un aper\c cu dans la partie suivante.

\subsection{Cas des groupes unitaires et orthogonaux}

Nous voulons dans cette partie donner une id\'ee de la preuve du th\'eor\`eme
pr\'ec\'edent dans un cas simple mais pertinent pour nos applications : $\bG$
est le groupe $\d{\d{SO}}(q)$ o\`u $q$
est une forme quadratique d\'efinie positive rationelle en au moins 5 variables
(la m\'ethode qu'on pr\'esente fonctionne sans modification pour un groupe
$\d{SU}(h)$, $h$ de rang au moins 4). Pr\'ecis\'ement, nous prouvons la
proposition suivante :

\begin{prop}\label{caspart}
Soit $k \geq 5$ et soit $q$ une $\dr Q$-forme quadratique d\'efinie positive sur
$\dr
Q^k$. Pour toutes fonctions $f$ et $g$ de $L^2_0(\d{SO}(q,\dr A)/\d{SO}(q,\dr
Q))$, on a :
$$|\langle f,g.h\rangle | \xrightarrow[g\in \d{SO}(q,\dr A)]{g \to \infty} 0$$
\end{prop}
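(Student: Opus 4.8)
The strategy is to realize the decay of matrix coefficients for $\mathrm{SO}(q,\dr A)$ as a consequence of the decay for $\mathrm{Spin}(q,\dr A)$, for which one has property $(\tau)$ (or even temperedness of the non-trivial spectrum away from a finite set of exceptional representations), together with a reduction of the adelic decay to a product of local estimates. First I would introduce the spin cover $\pi:\dr G=\mathrm{Spin}(q)\to\mathrm{SO}(q)$, which is a central isogeny of $\dr Q$-groups with kernel $\mu_2$; since $\dr G$ is simply connected and $q$ is definite with $k\geq 5$, the group $\dr G(\dr R)$ is compact and $\dr G$ is absolutely almost simple. The pullback by $\pi$ embeds $L^2\bigl(\mathrm{SO}(q,\dr A)/\mathrm{SO}(q,\dr Q)\bigr)$, and hence $L^2_0$, into a space of functions on $\dr G(\dr A)$ invariant under $\pi^{-1}(\mathrm{SO}(q,\dr Q))$, which contains $\dr G(\dr Q)$ with finite index in the relevant adelic quotient; thus it suffices to prove the decay of coefficients for $L^2_0\bigl(\dr G(\dr A)/\Lambda\bigr)$ where $\Lambda$ is such a finite-index overgroup of $\dr G(\dr Q)$, and this reduces (by averaging over the finite set of cosets) to the statement for $\dr G(\dr Q)$ itself.

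Next I would decompose $L^2_0\bigl(\dr G(\dr A)/\dr G(\dr Q)\bigr)$ into its local/automorphic constituents. Since $\dr G(\dr Q)$ is a lattice in $\dr G(\dr A)$ and $\dr G$ is simply connected with $\dr G(\dr A)$ having strong approximation, the right regular representation on $L^2_0$ decomposes as a direct integral of automorphic representations $\bigotimes_v \sigma_v$; by work of Clozel--Oh--Ullmo and Gorodnik--Maucourant--Oh (property $(\tau)$ for $\dr G$, valid in rank $\geq 2$ locally or via the classification of automorphic forms on $\mathrm{Spin}$ of quadratic forms in $\geq 5$ variables), each such $\sigma$ is, outside a finite set $S$ of places, uniformly isolated from the trivial representation: there is $p<\infty$ such that every non-trivial $\sigma_v$ is strongly $L^{p+\varepsilon}$, equivalently its matrix coefficients lie in $L^{p+\varepsilon}(\dr G(K_v))$. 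The archimedean place causes no trouble because $\dr G(\dr R)$ is compact and carries no non-trivial unitary spectrum contributing to $L^2_0$ after the reduction.

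Then I would assemble the global estimate. Given $f,h\in L^2_0$, write the coefficient $\langle f, g\cdot h\rangle$ and bound it, using the direct-integral decomposition and Cauchy--Schwarz, by a supremum over the non-trivial $\sigma$ of $|\langle f_\sigma,\sigma(g)h_\sigma\rangle|$; for $g=(g_v)_v$ tending to infinity in $\dr G(\dr A)$, at least one local component $g_{v_0}$ leaves every compact set, and the local $L^{p+\varepsilon}$-bound gives $|\langle f_\sigma,\sigma(g)h_\sigma\rangle|\leq \Xi_{v_0}(g_{v_0})\,\|f_\sigma\|\,\|h_\sigma\|$ with $\Xi_{v_0}\to 0$ at infinity, uniformly in $\sigma$; integrating against $\|f_\sigma\|\,\|h_\sigma\|$, which is integrable by $f,h\in L^2$, yields $|\langle f,g\cdot h\rangle|\to 0$. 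Finally one transports this back along $\pi^*$ to $\mathrm{SO}(q,\dr A)$, noting that $g\to\infty$ in $\mathrm{SO}(q,\dr A)$ lifts to $g\to\infty$ in $\dr G(\dr A)/\mathrm{(center)}$ and the central directions act trivially on the relevant function space.

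The main obstacle is the passage from $\mathrm{SO}(q)$ to $\mathrm{Spin}(q)$ combined with the failure of surjectivity of $\pi$ on adelic and rational points: one must check carefully that $L^2_0$ of the $\mathrm{SO}$-quotient pulls back inside $L^2_0$ of a \emph{finite-index} extension of $\dr G(\dr Q)$ in $\dr G(\dr A)$, so that the decay for $\dr G(\dr Q)$ (where $(\tau)$ is available) actually applies — this is where one uses that $\dr G(\dr A)/\dr G(\dr Q)$ surjects with finite fibers onto the component of $\mathrm{SO}(q,\dr A)/\mathrm{SO}(q,\dr Q)$ hit by $\pi$, together with the fact that characters trivial on the lattice are killed when we restrict to $L^2_0$. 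Everything else is a direct-integral bookkeeping argument using the quoted local estimates.
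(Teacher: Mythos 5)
Your overall strategy (lift to the simply connected cover $\mathrm{Spin}(q)$, where uniform decay/spectral gap results are available, then transfer back) is a reasonable instinct, but as written the reduction step contains a genuine gap. The central isogeny $\pi:\mathrm{Spin}(q)\to\mathrm{SO}(q)$ is very far from surjective on adelic points: its cokernel is controlled by the spinor norm, and $\mathrm{SO}(q,\mathbb{A})/\pi(\mathrm{Spin}(q,\mathbb{A}))\,\mathrm{SO}(q,\mathbb{Q})$ is an \emph{infinite} compact group (a quotient of $\mathbb{A}^\times/\mathbb{Q}^\times(\mathbb{A}^\times)^2$), not a finite one. Consequently the image of the Spin-quotient inside $\mathrm{SO}(q,\mathbb{A})/\mathrm{SO}(q,\mathbb{Q})$ has infinite index and Haar measure zero, so the pullback $\pi^*$ is not even well defined on $L^2$-classes, and there is no ``finite-index overgroup of $\mathrm{Spin}(q,\mathbb{Q})$'' to which one can reduce by averaging over cosets. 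Worse, an element $g\to\infty$ in $\mathrm{SO}(q,\mathbb{A})$ need not lift to $\mathrm{Spin}(q,\mathbb{A})$ even up to a fixed compact set (the local coset representatives needed at a place $p$ where the spinor norm has odd valuation grow with $p$), so decay of coefficients for the Spin action does not directly control the coefficients along all directions of $\mathrm{SO}(q,\mathbb{A})$ --- and it is precisely in those transverse directions that the infinitely many automorphic characters (spinor norm composed with quadratic id\`ele class characters) live, which is why $L^2_0$ must exclude all one-dimensional subrepresentations. Making this transfer rigorous is exactly the delicate non--simply-connected bookkeeping carried out in Gorodnik--Maucourant--Oh; it cannot be dispatched by a finite-fibers claim. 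A second, independent gap is in your assembly step: for a sequence $g_n\to\infty$ in the adeles it is \emph{not} true that some fixed local component $g_{n,v_0}$ leaves every compact set; the sequence may escape through infinitely many places while each coordinate stays bounded (e.g.\ $g_n$ non-integral only at a prime $p_n\to\infty$). To close the argument one needs a bound that is uniform both in the representation and in the place, such as Oh's estimate $\xi_p(g_p)\le 2/\sqrt{p}$ for $g_p\notin \mathrm{SO}(q,\mathbb{Z}_p)$; your sketch never invokes such a uniform-in-$p$ decay.

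For comparison, the paper avoids the cover altogether and works directly with $\mathrm{SO}(q)$: strong approximation (via Gan--Oh) shows $L^2_0$ has no vectors invariant under the subgroup $\mathrm{SO}(q,\mathbb{Q}_p)^+$ generated by unipotents; Peter--Weyl reduces to $U$-finite vectors for $U=\prod_p\mathrm{SO}(q,\mathbb{Z}_p)$; for all but finitely many $p$ the local group has rank $\ge 2$ with a Cartan decomposition, and Oh's pointwise bounds $\xi_p$ (with the crucial $2/\sqrt{p}$ estimate off the maximal compact) handle any sequence whose component over those places goes to infinity, while Howe--Moore handles the finitely many remaining places. That two-case dichotomy on the sequence $(g_n)$ is exactly what replaces your incorrect ``some local component tends to infinity'' step, and it needs no automorphic input beyond the invariant-vector fact.
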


La preuve que nous donnons reprend les id\'ees de
\cite{gorodnik-oh-maucourant}. Comme nous ne cherchons qu'une d\'ecroissance des
coefficients et non pas une majoration, nous pouvons par endroits aller un peu
plus vite, par exemple en utilisant le th\'eor\`eme de
Howe-Moore \cite[Th\'eor\`eme 10.1.4]{zimmer}.

\begin{proof}
Nous devons d'abord nous assurer d'un fait qui nous permettra d'utiliser les
r\'esultats de H. Oh \cite{oh} et le th\'eor\`eme de Howe-Moore :
\begin{fait}
Soit $p$ un nombre premier tel que le groupe $\d{SO}(q,\dr Q_p)$ est
non-compact. Soit
$\d{SO}(q,\dr Q_p)^+$ le sous-groupe de $\d{SO}(q,\dr Q_p)$ engendr\'e par les
\'el\'ements unipotents.

Alors il n'existe pas de vecteurs $\d{SO}(q,\dr Q_p)^+$-invariants dans
l'espace
$L^2_0(\d{SO}(q,\dr A)/\d{SO}(q,\dr Q))$.
\end{fait}

Ce fait est un cons\'equence de l'approximation forte \cite[Paragraphe
7.2]{platonov-rapinchuk}. Nous renvoyons pour sa d\'emonstration au lemme 3.8 de
\cite{gan-oh} et \`a sa preuve.

Soit $U$ le sous-groupe compact $\displaystyle \prod_{p \d{ premier}}
\d{SO}(q,\dr Z_p)$ de $\d{SO}(q,\dr A)$. Par d\'efinition, une
fonction $f$ dans l'espace $L^2_0(\d{SO}(q,\dr A)/\d{SO}(q,\dr Q))$ est
$U$-finie si l'espace vectoriel engendr\'e par $U.f$ est de dimension finie.
C'est une cons\'equence classique du th\'eor\`eme de Peter-Weyl que
l'ensemble des fonctions $U$-finies
est dense dans $L^2_0(\d{SO}(q,\dr A)/\d{SO}(q,\dr Q))$. Donc il
suffit de d\'emontrer la proposition \ref{caspart} pour de telles fonctions. Si
$f$ est $U$-finie, on note $d(f)$ la dimension de l'espace engendr\'e par
$U.f$. Pour $p$ un nombre premier, on d\'efinit de m\^eme la notion de fonction
$\d{SO}(q,\dr Z_p)$-finie et, si $f$ est un fonction $\d{SO}(q,\dr Z_p)$-finie,
on note $d(f,p)$ la dimension de l'espace engendr\'e par $(\d{SO}(q,\dr
Z_p).f)$. Bien s\^ur, si $f$ est $U$-finie, elle est $\d{SO}(q,\dr Z_p)$-finie.
Dans ce cas, on a $d(f)\geq d(f,p)$.

L'hypoth\`ese sur $k$ nous assure que, pour $p$ suffisamment grand, le groupe
$\d{SO}(q,\dr Q_p)$ est de rang au moins $2$ et qu'on dispose d'une
d\'ecomposition de Cartan $\d{SO}(q,\dr Q_p)=\d{SO}(q,\dr Z_p) A^+ \d{SO}(q,\dr
Z_p)$ o\`u $A^+$ est un sous-semigroupe d'un tore d\'eploy\'e maximal. En
effet ceci est une cons\'equence de r\'esultats de cohomologie galoisienne
\cite[Th\'eor\`eme 6.7]{platonov-rapinchuk} ; mais aussi de mani\`ere plus
\'el\'ementaire du lemme \ref{rang2} dont nous avons besoin dans la partie
\ref{orth}. Notons $\mathcal P_2$ l'ensemble des nombres premiers v\'erifiant
ces deux propri\'et\'es, et $\mathcal P_1$ le compl\'ementaire de $\mathcal P_2$
dans l'ensemble $\mathcal V$ des places de $\dr Q$.

Pour un groupe de rang $2$ sur $\dr Q_p$, une majoration
uniforme des coefficients de n'importe quelle repr\'esentation unitaire
est donn\'ee par les r\'esultats de H. Oh \cite{oh}. Nous r\'esumons ce que nous
utilisons de \cite{oh} dans le fait suivant, o\`u l'on consid\`ere la
repr\'esentation unitaire de
$\d{SO}(q,\dr Q_p) \subset \d{SO}(q,\dr A)$ dans $L^2_0(\d{SO}(q,\dr
A)/\d{\d{SO}}(q,\dr Q))$ :

\begin{fait}[Oh \cite{oh}]\label{Oh}
Il existe pour tout $p$ dans $\mathcal P_2$ une fonction $\xi_p \; : \;
\d{SO}(q,\dr Q_p) \to ]0,1]$
v\'erifiant les conditions suivantes :
\begin{itemize}
 \item pour tout  $g_p \in \d{SO}(q,\dr Q_p)$, on a
$\xi_p(g_p) \xrightarrow{g_p \to \infty }0$.
\item pour tout $g_p \in \d{SO}(q,\dr Q_p)$, si $g_p$ n'appartient pas \`a
$\d{SO}(q,\dr Z_p)$, on a $\xi_p(g_p) \leq \frac{2}{\sqrt{p}}$,
\item pour toutes fonctions $f$ et $h$ de
$L^2_0(\d{SO}(q,\dr A)/\d{SO}(q,\dr Q))$ qui sont $U$-finies,
on a : $$\langle f,g_p.h\rangle  \leq \sqrt{d(f,p) d(h,p)} \|f\|_2\|h\|_2
\xi_p(g_p) \;.$$
\end{itemize}
\end{fait}

Ce fait nous permet de g\'erer les nombres premiers de $\mathcal
P_2$. Pour g\'erer les autres, qui sont en nombre fini par hypoth\`ese, nous
pouvons faire appel au th\'eor\`eme de Howe-Moore \cite[Th\'eor\`eme
10.1.4]{zimmer} qui donne la d\'ecroissance des coefficients pour toute
repr\'esentation d'un produit fini de groupes $p$-adiques simples sans vecteurs
invariants par le sous-groupe engendr\'e par les \'el\'ements unipotents. Nous
\'enon\c
cons une version ad-hoc de ce th\'eor\`eme :

\begin{fait}[Howe-Moore \cite{zimmer}]
Soit $G_{\mathcal P_1}$ le groupe $\prod_{p \in \mathcal
P_1} \d{SO}(q,\dr Q_p)$. Alors pour tous sous-ensembles compacts $F$ et $H$ de
$L^2_0(\d{SO}(q,\dr A)/\d{SO}(q,\dr Q))$, on a la convergence :
$$\d{sup}_{f\in F \; ; \; h\in H}\;|\langle f,g.h\rangle | \xrightarrow[g\in
G_{\mathcal
P_1}]{g \to \infty} 0$$
\end{fait}

Fixons maintenant $f$ et $h$ deux fonctions $U$-finies, normalis\'ees pour que
$\|f\|_2=\|h\|_2=1$.
Soit $(g_n=(g_{p,n})_{p\in \mathcal V})$ une suite d'\'el\'ements de
$\d{SO}(q,\dr A)$ qui tend
vers l'infini. Pour tout $n$, on note : $$g^1_n=(g_{p,n})_{p \in\mathcal P_1}
\d{ et } g^2_n=(g_{p,n})_{p \in\mathcal P_2}\;.$$ De plus, pour nombre premier
$p$, on
note
$\overline {g_{p,n}}=(g_{q,n})_{q \neq p}$. Quitte \`a partitionner la suite
$(g_n)$ en
sous-suites, on a l'alternative suivante :
\begin{enumerate}
 \item La suite $(g^2_n)$ reste dans une partie compacte.
\item La suite $(g^2_n)$ tend vers l'infini.
\end{enumerate}

Nous traitons ces deux cas s\'epar\'ement.

{\bf Cas 1) :}
On applique le th\'eor\`eme de Howe-Moore
\`a la suite $(g^1_n)$ dans le groupe $G_{\mathcal P_1}$ et aux deux
sous-ensembles compacts $\left\{f\right\}$
et $\left\{g^2_n.h \d{ pour } n \in \dr N\right\}$ de $L^2_0(\d{SO}(q,\dr
A)/\d{SO}(q,\dr Q))$. On obtient alors que le coefficient
$\langle f,g_n.h\rangle =\langle f,g^1_n.(g^2_n.h)\rangle $ tend vers $0$ avec
$n$.

{\bf Cas 2) :} On remarque que pour tout nombre premier $p$, l'action de
$\d{SO}(q,\dr
 Z_p)$ commute \`a celle de $\overline {g_{p,n}}$. On en d\'eduit que le
vecteur $\overline {g_{p,n}}.h$ est $\d{SO}(q,\dr Z_p)$-fini et on a :
$$d((\overline {g_{p,n}}.h), p)=d(h,p)\leq d(h)\;.$$ On peut donc appliquer
la troisi\`eme assertion du fait \ref{Oh}. Pour $p$ dans $\mathcal
P_2$, on obtient :
$$\langle f,g_n.h\rangle =\langle f,g_{p,n}.(\overline {g_{p,n}}.h)\rangle \leq
\sqrt{d(f) d(h)}
\xi_p(g_{p,n})\;.$$

On en d\'eduit que le coefficient $\langle f, g_n.h\rangle $ est major\'e par :
$$\sqrt{d(f) d(h)}
\d{inf}_{p \in\mathcal P_2} \xi_p(g_{p,n})\;.$$ Or les deux premi\`eres
assertions
du fait \ref{Oh} impliquent que ce minimum tend vers $0$ quand $g^2_n$
tend vers l'infini.
Ceci termine la preuve de la proposition.
\end{proof}

\subsection{Invariance par un sous-groupe compact ouvert}

Pour appliquer ce th\'eor\`eme, nous devons comprendre comment s'\'ecrit une
fonction dans la d\'ecomposition de $L^2(\GG)$ en $L^2_0(\GG)$ et son
orthogonal. Or les fonctions que nous \'etudierons seront toutes invariantes
par un sous-groupe compact ouvert. Nous fixons donc un tel sous-groupe $U$
compact ouvert.

Notons alors $\La_U$ l'ensemble des caract\`eres unitaires
$U$-invariants de $\La$ et $G_U$ l'intersection de tous les
noyaux des caract\`eres de $\La_U$ :
$$G_U =\cap_{\chi \in \La_U} \d{Ker} \chi \;.$$ On dispose alors du lemme
suivant, en partie tir\'e du lemme 4.1 de \cite{gorodnik-oh-maucourant} :

\begin{lemm}\label{lem:dec}
\begin{enumerate}
        \item L'ensemble $U G^{\infty} \G$ est inclus dans $G_U$.
        \item Le groupe $G_U$ est d'indice fini $N_U$ dans $G$.
        \item Si $f \in L^2(\GG)$ est d\'efinie sur $\pi(G_U)$ et est
$U$-invariante, alors on a :  $$f - \left(\int_{\pi(G_U)} f\d{d}m\right)
{\bf 1}_{\pi(G_U)} \in L^2_0(\GG)$$
        \item Si $f \in L^2(\GG)$ est d\'efinie sur $\pi(G_U)$ et est
$U$-invariante, alors on a la convergence :$$\langle h.f,f\rangle
\xrightarrow[h\in G_U]{h\to \infty} \left(\int_\GG f\d{d}m\right)^2\;.$$
\end{enumerate}
\end{lemm}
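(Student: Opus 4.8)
The four assertions build on one another, so I would establish them in order, with the decay theorem (Théorème~\ref{the:dec}) entering only at the very end.

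\emph{Assertion (1).} The characters in $\La_U$ are unitary characters of $G$ trivial on $\G$ and on $U$. A character $\chi\colon G\to\dr S^1$ is trivial on the closed subgroup generated by $U$, $G^\infty$ and $\G$: indeed $\chi$ is trivial on $U$ by $U$-invariance, trivial on $\G$ by definition of $\La$, and trivial on $G^\infty$ because $G^\infty$ is compact and connected... — actually compactness alone does not force triviality, so the point to make is that $G^\infty$, being the group of real points of a connected semisimple group, is its own commutator subgroup (it has no nontrivial characters), hence $\chi|_{G^\infty}=1$. Therefore every $\chi\in\La_U$ kills $U$, $G^\infty$ and $\G$, so it kills the group they generate, i.e. $U G^\infty\G\subset \bigcap_{\chi\in\La_U}\operatorname{Ker}\chi = G_U$.

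\emph{Assertion (2).} This is where I expect the real work to lie, and it is essentially Lemme~4.1 of \cite{gorodnik-oh-maucourant}. The idea is that $\La_U$ is finite: a $U$-invariant character of $G/\G$ factors through the quotient $\G\backslash G/U$-style object, more precisely through the double coset space, and finiteness of $\La_U$ comes from the fact that the abelianization of $G$ relevant here is, after killing $U$ and $G^\infty$, a finitely generated (indeed finite) abelian group — this uses that $\bG$ is almost-$K$-simple so that $\bG(K)$ has finite abelianization at the relevant level, together with strong approximation / class-number finiteness to control $\G\backslash G^f/U$. Once $\La_U=\{\chi_1,\dots,\chi_r\}$ is finite, $G_U=\bigcap_i\operatorname{Ker}\chi_i$ is the kernel of the homomorphism $G\to(\dr S^1)^r$, $g\mapsto(\chi_i(g))_i$; its image is a compact abelian group, but it is also a quotient of $G/(G_U)$, and because each $\chi_i$ is trivial on $G^\infty$ and on $U$ the image is a quotient of the discrete finite group $\G\backslash G/(UG^\infty)$-abelianization, hence finite. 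So $[G:G_U]=:N_U<\infty$.

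\emph{Assertions (3) and (4).} For (3): write $f_0 = f - c\,{\bf 1}_{\pi(G_U)}$ with $c=\int_{\pi(G_U)}f\,dm$ (normalizing $m$ so that $m(\pi(G_U))$ is accounted for; since $[G:G_U]=N_U$, $\pi(G_U)$ is a union of components of $\GG$ of total mass $1/N_U$, and one should be slightly careful with constants). I must show $f_0\perp$ every one-dimensional subrepresentation, i.e. $\langle f_0,\chi\rangle=0$ for every unitary character $\chi\in\La$. If $\chi\in\La_U$, then $\chi$ is $G_U$-invariant (trivial on $G_U$), hence constant on $\pi(G_U)$ and zero off it, so $\langle f_0,\chi\rangle = \langle f,\chi\rangle - c\,\overline{\chi}\,m(\pi(G_U)) = 0$ by choice of $c$ together with the fact that $f$ and $\chi$ are both supported and locally constant appropriately on $\pi(G_U)$. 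If $\chi\in\La\setminus\La_U$, then $\chi$ is not $U$-invariant, so averaging over $U$ kills it: $\langle f_0,\chi\rangle=\langle U\!*\!f_0,\chi\rangle=\langle f_0,U\!*\!\chi\rangle=0$ since $f_0$ is $U$-invariant and $U\!*\!\chi=0$. Hence $f_0\in L^2_0(\GG)$.

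For (4): decompose $f = c\,{\bf 1}_{\pi(G_U)} + f_0$ with $f_0\in L^2_0$ by (3). Then $\langle h.f,f\rangle = c^2\langle h.{\bf 1}_{\pi(G_U)},{\bf 1}_{\pi(G_U)}\rangle + 2c\,\Re\langle h.{\bf 1}_{\pi(G_U)},f_0\rangle + \langle h.f_0,f_0\rangle$, where I have used that ${\bf 1}_{\pi(G_U)}$ is real. For $h\in G_U$, the set $\pi(G_U)$ is $G_U$-invariant, so $h.{\bf 1}_{\pi(G_U)}={\bf 1}_{\pi(G_U)}$; thus the first term is the constant $c^2 m(\pi(G_U))$, which is exactly $\bigl(\int_\GG f\,dm\bigr)^2$ after bookkeeping the normalization ($\int_\GG f\,dm = c\,m(\pi(G_U))$ and one arranges constants so this matches — here I would double-check the statement intends $c$ and $m(\pi(G_U))$ to combine correctly, possibly $m(\pi(G_U))=1/N_U$ and $f$ is assumed normalized so the product is as claimed). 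The cross term: $h.{\bf 1}_{\pi(G_U)}={\bf 1}_{\pi(G_U)}$ is orthogonal to $f_0\in L^2_0$, so it vanishes identically for $h\in G_U$. Finally, apply Théorème~\ref{the:dec} to the pair $f_0,f_0\in L^2_0(\GG)$: as $h\to\infty$ in $G$ — a fortiori as $h\to\infty$ inside the finite-index, hence cocompact-complement, subgroup $G_U$ — we get $\langle h.f_0,f_0\rangle\to 0$. Combining, $\langle h.f,f\rangle\to\bigl(\int_\GG f\,dm\bigr)^2$ as $h\to\infty$ in $G_U$. The one delicate point throughout is matching the normalization constants between ${\bf 1}_{\pi(G_U)}$, the index $N_U$, and the Haar probability $m$; everything else is soft functional analysis once finiteness of $\La_U$ (assertion (2)) is in hand.
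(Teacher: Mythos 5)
Your proposal follows the paper's proof point for point: for (1) triviality of the $U$-invariant characters on $U G^{\infty}\G$ (the paper invokes continuity and connectedness of $G^{\infty}$, you invoke perfectness of the compact connected semisimple $G^{\infty}$ --- both acceptable); for (2) finiteness of the index via adelic reduction theory, i.e.\ finiteness of $G^{\infty}U\backslash G/\G$, which is exactly the ingredient the paper cites; for (3) orthogonality against the characters of $\La$, split according to whether they are $U$-invariant, the non-$U$-invariant ones being killed by averaging over $U$; for (4) the decomposition of $f$ into its projection on the span of $\La$ plus an $L^2_0$-part, the cross terms vanishing because $h.{\bf 1}_{\pi(G_U)}={\bf 1}_{\pi(G_U)}$ for $h\in G_U$, and the decay theorem applied to the $L^2_0$-part. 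So the route is the same as the author's.

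Two remarks on point (3), where your write-up is weakest. First, the claim that a character $\chi\in\La_U$ is ``zero off $\pi(G_U)$'' is false: $|\chi|\equiv 1$ on all of $\GG$; what localizes the computation to $\pi(G_U)$ is only that $f$ and ${\bf 1}_{\pi(G_U)}$ are supported there. Second, the constant you twice defer (``one should be slightly careful with constants'') is exactly where the verification bites and cannot be waved away: for $\chi\in\La_U$ one has $\langle f,\chi\rangle=\int_{\pi(G_U)}f\,\d{d}m$ but $\langle{\bf 1}_{\pi(G_U)},\chi\rangle=m(\pi(G_U))=1/N_U$, so $\langle f-c\,{\bf 1}_{\pi(G_U)},\chi\rangle=0$ forces $c$ to be the \emph{mean} $N_U\int_{\pi(G_U)}f\,\d{d}m$ of $f$ over $\pi(G_U)$; equivalently $\sum_{\chi\in\La_U}\chi=N_U\,{\bf 1}_{\pi(G_U)}$, so the projection of $f$ on the span of $\La$ is that mean times ${\bf 1}_{\pi(G_U)}$, and with the coefficient as printed the orthogonality holds only when $N_U=1$. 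Carrying this through your computation in (4), the limit comes out as $N_U\left(\int_\GG f\,\d{d}m\right)^2$ rather than $\left(\int_\GG f\,\d{d}m\right)^2$. To be fair, the paper's own proof is silent on precisely this normalization (it computes $\langle f,\chi\rangle$ but never compares it with $\langle{\bf 1}_{\pi(G_U)},\chi\rangle$), so you have reproduced its gloss rather than introduced a new error; still, a complete argument must either read $\int_{\pi(G_U)}f\,\d{d}m$ as the normalized mean over $\pi(G_U)$ or carry the factor $N_U$ explicitly (which is harmless for the equidistribution statement, since the final measure is renormalized by $\d{Card}(\G_n)$).
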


\begin{proof}
Le premier point est une cons\'equence de la continuit\'e des
caract\`eres, et du fait que  $G^{\infty}$ est connexe car $\bG$ est connexe et
$\dr R$-anisotrope. On en
d\'eduit le deuxi\`eme car, d'apr\`es la th\'eorie de la r\'eduction (version
ad\'elique) \cite[Th\'eor\`eme 5.1]{platonov-rapinchuk}, l'ensemble $G^{\infty}U
\backslash G / \G$ est fini.

Pour le point 3 : soit $\chi \in \La$. On veut calculer $\langle f,\chi\rangle
$. Dans
un premier temps, si $U$ n'est pas dans le noyau de $\chi$, alors ce produit
scalaire est nul. Ensuite, si $\chi \in \La_U$, alors $\langle f,\chi\rangle =
\int_{\pi(G_U)} f\d{d}m$.

Pour le dernier point : soit $\bar f=f - \left(\int_{\pi(G_U)} f\d{d}m\right)
{\bf 1}_{\pi(G_U)}$. D'apr\`es le point 4, on peut appliquer le th\'eor\`eme
\ref{the:dec} \`a $\bar f$. On a alors : $\langle \bar
f,h.\bar f\rangle \xrightarrow{h \to \infty} 0$. Or on v\'erifie ais\'ement que
quand
$h \in G_U$, on a $\langle \bar f,h.\bar f\rangle =\int_\GG
f(hx)f(x)\d{d}m-\left(\int_\GG
f\d{d}m\right)^2$.
\end{proof}

Nous pouvons maintenant d\'emontrer le th\'eor\`eme \ref{the:appl}.

\section{Dualit\'e}\label{thm}

Nous allons en r\'ealit\'e d\'emontrer un th\'eor\`eme plus pr\'ecis que le
th\'eor\`eme \ref{the:appl}. En effet, dans les hypoth\`eses de ce th\'eor\`eme,
on avait besoin de supposer que $\d{Card}(\G_n)$ tend vers l'infini. Cette
hypoth\`ese est
en pratique difficile \`a v\'erifier. Par exemple dans le cadre unitaire
d\'ecrit dans l'introduction, il faudrait pour appliquer le th\'eor\`eme
\ref{the:appl} conna\^itre a priori un grand nombre de solutions enti\`eres de
l'\'equation $(E_n)$.

\subsection{Le th\'eor\`eme d'\'equidistribution}

Dans le th\'eor\`eme suivant, cette hypoth\`ese est remplac\'ee par
l'hypoth\`ese que les ensembles compacts $G_n\cap G_U$ sont deux \`a deux
distincts. On
remarque que cette hypoth\`ese est a priori plus simple \`a v\'erifier, car
nous n'avons plus besoin de trouver des solutions enti\`eres. Nous
reviendrons l\`a-dessus pour les applications dans les parties \ref{uni} et
\ref{orth}.

Nous utilisons les notations d\'efinies dans la partie \ref{notations} :

\begin{theo}\label{the:appl2}
Soit ${\bf G}$ un $K$-groupe, presque-$K$-simple, connexe tel que le
compl\'et\'e aux places archim\'ediennes $G^{\infty}$ est compact.
Soient $U$ un sous-groupe compact ouvert du groupe des ad\`eles finies $G^f$ et
$(H_n)$ une suite de sous-ensembles compacts bi-$U$-invariants de $G^f$.
Soit $\G$ un sous-groupe arithm\'etique du groupe des ad\`eles $G$ et $\G_n=\G
\cap (G^{\infty}\times H_n)$. On suppose que les ensembles $G_n\cap G_U$ sont
deux \`a deux distincts.

Alors, la projection de $\G_n$ dans $G^\infty$ s'\'equir\'epartit dans
$G^\infty$ :
$$\lim_{n\to \infty} \frac{(\mu \otimes \la) (\GG)}{(\mu \otimes \la)(G_n
\cap G_U)} \sum_{\g \in
\G_n} \delta_{\tau^{\infty}(\g)} = \mu \;;$$

en particulier, $\d{Card}(\G_n) \sim_{n\to \infty} \frac{(\mu \otimes \la)(G_n
\cap G_U)}{(\mu \otimes \la) (\GG)}$
\end{theo}

\begin{rema}
Nous pouvons supposer que le groupe $\bG$ est en r\'ealit\'e
\emph{absolument} presque-simple. En effet, si $\bG$ est
presque-$K$-simple, il existe une extension finie $L$ de $K$ et ${\bf H}$ un
$L$-groupe absolument presque-simple tels que $\bG$ est d\'efini comme la
restriction des scalaires de $L$ \`a $K$ \cite[6.21.ii]{borel-tits}. Mais
alors, par d\'efinition de la restriction des scalaires \cite[Section
2.1.2]{platonov-rapinchuk}, les $K$-points de $\bG$
s' identifient canoniquement aux $L$-points de ${\bf H}$, et le produit des
compl\'et\'es de $\bG$ aux places archim\'ediennes de $K$, c'est-\`a-dire le
groupe $G^\infty$, est isomorphe au produit $H^\infty$ des compl\'et\'es de
${\bf H}$ aux places archim\'ediennes de $L$. Donc pour d\'emontrer le
th\'eor\`eme pour le groupe $\bG$, il suffit de le montrer pour le groupe
absolument presque-simple ${\bf H}$.

Dor\'enavant, nous supposons toujours le groupe $\bG$ absolument
presque-simple et en particulier il v\'erifie les hypoth\`eses du th\'eor\`eme
\ref{the:dec}.
\end{rema}

Avant de prouver ce th\'eor\`eme, nous avons besoin de savoir que, dans les
condition du th\'eor\`eme, le volume des ensembles $G_n\cap G_U$ tend vers
l'infini. Le lemme suivant \'enonce ce r\'esultat bien connu des sp\'ecialistes
:

\begin{lemm}\label{lem:volume}
Soit $C_n$ une suite de parties compactes bi-$U$-invariantes 2 \`a 2 distinctes
de $G^f$.
Alors on a la limite : $\lim_{n \to \infty} \lambda(C_n) =+\infty$
\end{lemm}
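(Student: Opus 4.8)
The key point is that the $C_n$ are bi-$U$-invariant, so each $C_n$ is a union of double cosets $U g U$. Since $U$ is compact and open, each double coset $U g U$ is a finite union of left $U$-cosets, hence has positive, finite $\lambda$-measure; in fact $\lambda(UgU) \geq \lambda(U) = 1$ for every $g$. So the measure of $C_n$ is at least the number of distinct double cosets it contains. The plan is therefore to deduce from ``$C_n$ pairwise distinct, bi-$U$-invariant'' that $C_n$ must eventually contain arbitrarily many double cosets, and conclude.

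First I would set up the counting. Write $D_n$ for the (finite) set of double cosets $UgU \subset C_n$; then $\lambda(C_n) = \sum_{UgU \in D_n} \lambda(UgU) \geq \d{Card}(D_n)$, using $\lambda(UgU)\geq \lambda(U)=1$. So it suffices to show $\d{Card}(D_n) \to \infty$. Suppose not: then along a subsequence $\d{Card}(D_n)$ is bounded by some fixed integer $M$. The heart of the argument is now a pigeonhole/compactness step. A compact bi-$U$-invariant set containing at most $M$ double cosets is determined by the unordered tuple of those double cosets. I would argue that there are only finitely many double cosets that can appear: indeed, each $C_n$ is bounded (compact), so it is contained in some fixed compact set — but wait, the $C_n$ need not all lie in one compact set, so this needs care.

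The correct way around this is to bound things one double coset at a time, not globally. Here is the argument I expect to work: fix the subsequence on which $\d{Card}(D_n) \leq M$. For each $n$, $C_n = \bigcup_{j=1}^{M} U g_{j,n} U$ (allowing repetitions to pad the list to length $M$), and $\lambda(C_n) = \sum_{j} \lambda(Ug_{j,n}U) \leq M \cdot \max_j \lambda(Ug_{j,n}U)$; if along a further subsequence $\lambda(C_n)$ stays bounded, then each $\lambda(Ug_{j,n}U)$ stays bounded, so each $g_{j,n}$ ranges over a set of double cosets of uniformly bounded measure. The set $\{g \in G^f : \lambda(UgU) \leq R\}$ is a union of double cosets; I would check it is relatively compact (a double coset of bounded $\lambda$-measure meets only finitely many $U$-cosets, and the whole region is bi-$U$-invariant with bounded total measure once we also bound the \emph{number} of cosets — more simply, $UgU$ compact of measure $\le R$ lies in finitely many translates of $U$, and there are only finitely many double cosets meeting a fixed compact set since $U$ is open). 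Hence each $g_{j,n}$ stays in a fixed compact set $C$, so each lies in one of the finitely many double cosets meeting $C$; there are thus only finitely many possibilities for the unordered tuple $(Ug_{1,n}U, \dots, Ug_{M,n}U)$, hence only finitely many possibilities for $C_n$. This contradicts the hypothesis that the $C_n$ are pairwise distinct (an infinite family). Therefore $\lambda(C_n) \to \infty$.

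**The main obstacle** is precisely the non-uniformity just flagged: one cannot assume the $C_n$ lie in a common compact set, so the finiteness of the possible double cosets must be extracted from a boundedness-of-$\lambda$ assumption made \emph{for contradiction}, not a priori. Once one sees that ``$\lambda(C_n)$ bounded along a subsequence'' forces, via openness of $U$ and boundedness of each $C_n$, that all the relevant double cosets lie in a fixed compact set and hence form a finite list, the pigeonhole conclusion is immediate. The only other thing to be careful about is the elementary fact that a double coset $UgU$ in a locally compact group with $U$ compact open has $\lambda(UgU) \in \lambda(U)\dr N^{*}$, which I would record at the start.
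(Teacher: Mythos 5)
Your reduction is sound up to one point: writing each $C_n$ as a finite union of double cosets of measure at least $\lambda(U)=1$, and arguing by contradiction that if $\lambda(C_n)$ stayed bounded by $R$ along a subsequence, then all double cosets involved would have measure at most $R$, hence (granting that $K_R=\{g\in G^f:\lambda(UgU)\le R\}$ is relatively compact) would belong to a finite list, leaving only finitely many possibilities for the $C_n$ and contradicting pairwise distinctness. This is essentially the same reduction the paper makes implicitly when it restates the lemma as ``$\lambda(UgU)\to\infty$ when $g\to\infty$ in $G^f$''. The genuine gap is your justification of the key claim that $K_R$ is relatively compact: the argument ``$UgU$ compact of measure $\le R$ lies in finitely many translates of $U$, and there are only finitely many double cosets meeting a fixed compact set'' is circular, because which translates of $U$ contain $UgU$ depends on $g$, and the ``fixed compact set'' is exactly what you are trying to produce. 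Note that the claim is not a formal consequence of $U$ being compact open: in an abelian group (or for any central direction) one has $\lambda(UgU)=\lambda(gU)=1$ for all $g$, so arbitrarily many distinct double cosets of bounded measure exist; the statement really uses that $\bG$ is almost-$K$-simple.

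What is missing is precisely the content of the paper's section \ref{partie:volume}. There, after reducing to a convenient maximal compact $U_0=\prod_\omega U_\omega$ via lemma \ref{lem:changer}, the paper uses the Cartan decomposition and the formula for $\d{Card}(U_\omega a U_\omega/U_\omega)$ in terms of the weighted length on the affine Weyl group (Bruhat--Tits theory, \cite{Tits}) to prove the two local facts of lemma \ref{vol}: (i) any nontrivial local double coset has at least $p$ left cosets, where $p$ is the residue characteristic, and (ii) the local coset count tends to infinity as the Cartan component does. Both are needed for the adelic conclusion, because a sequence $g_n\to\infty$ in $G^f$ may go to infinity either by having a component going to infinity at some fixed place (handled by (ii)) or by acquiring nontrivial components at places above larger and larger primes, each contributing only a bounded factor --- but a factor $\ge p\ge A$ by (i), which is what rules out double cosets of bounded measure escaping every compact set. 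Your proposal contains none of this local input, so as written the central step is unproved; to complete it you would need to establish (i) and (ii), or an equivalent lower bound on $\lambda(UgU)$, by some structure-theoretic argument as in the paper.
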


Ce lemme est une cons\'equence des
formules de d\'enombrement du volume d'une double-classe pour la d\'ecomposition
de Cartan dans un groupe $p$-adique. On peut trouver de
telles formules dans \cite[paragraphe 1.5]{casselman} ou bien
\cite[7.3]{Gross}. La formule que nous utilisons d\'ecoule directement d'une
formule de d\'enombrement pour le volume des doubles classes modulo un
sous-groupe d'Iwahori dans un groupe $p$-adique \cite[Paragraphe 3.3]{Tits}.

Cependant, comme nous n'avons pas de r\'ef\'erences pr\'ecises et que la
preuve de ce r\'esultat n\'ecessite l'introduction des objets classiques
d'\'etude des groupes semi-simples $p$-adiques, nous renvoyons cette preuve \`a
la partie \ref{partie:volume}.

\subsection{Preuve par dualit\'e}

Nous prouvons maintenant le th\'eor\`eme \ref{the:appl2}.

\begin{proof}[D\'emonstration du th\'eor\`eme \ref{the:appl2}]
Fixons une fonction $\varphi$ continue sur $G^{\infty}$. Nous voulons prouver
la limite suivante :

$$
\lim_{n\to\infty} \frac{(\mu \otimes \la) (\GG)}{(\mu \otimes \la)(G_n\cap G_U)}
\sum_{\g \in
\G_n} \varphi
(\tau^{\infty}(\g)) = \int_{G^{\infty}} \varphi \d{d}\mu \;.
$$

Pour cela, nous d\'efinissons la fonction $f$ sur $G$ en posant
$f(g_{\infty},g_f)=\varphi(g_{\infty})$. Nous posons ensuite
$\displaystyle F_n(g,h)=\sum_{\g \in \G} f(g\g h^{-1}) {\bf 1}_{G_n} (g\g
h^{-1})$.

\begin{rema}
L'introduction de ces fonctions $F_n$ est classique. Elles sont par exemple
utilis\'ees dans \cite[Partie 5]{gorodnik-oh-maucourant}. On peut
remarquer que, au moins dans le cas o\`u $\varphi$ est la fonction constante
\'egale \`a 1, ces fonctions $F_n$ sont exactement celles introduites dans
\cite[Partie 5]{Eskin-MacMullen}. En effet, nous voyons ici le groupe $G$
comme l'espace sym\'etrique $(G\times G)/G$, o\`u $G$ agit sur $G\times G$ par
$g.(g_1,g_2)=(gg_1,g_2g^{-1})$.

Plus g\'en\'eralement, la preuve du th\'eor\`eme 3.1 est un avatar de la
m\'ethode pr\'esent\'ee dans \cite{Eskin-MacMullen}.
\end{rema}

On observe que pour tous $u_1$, $u_2$ dans $U$ et $\g_1$, $\g_2$ dans
$\G$, on a :
\begin{eqnarray}\label{F}
 F_n(u_1 g \g_1, u_2 h \g_2)= F_n(g,h)\;.
\end{eqnarray}
(En effet, on peut
modifier les parties non-archim\'ediennes sans modifier la valeur de $f$). Ainsi
$F_n$ est une fonction continue born\'ee d\'efinie sur $(\GG)^2$, invariante par
l'action \`a gauche de $U\times U$. De plus, on
remarque - en notant $e$ l'\'el\'ement neutre de $G$ - que : $$\displaystyle
F_n(e,e) = \sum_{\g \in \G_n} \varphi (\tau^{\infty}(\g)) \;.$$

La fonction $\varphi$ est continue sur le groupe compact $G^\infty$, elle est
donc
uniform\'ement continue. Ainsi, soient $\varepsilon> 0$ et
$U_{\varepsilon}$ un
voisinage de l'identit\'e dans
$G^{\infty}$ tels que pour tout $u$ et $v \in U_{\varepsilon}$, pour tout $g \in
G^{\infty}$, on a $|\varphi(ugv)-\varphi(g)|\leq \varepsilon$. Notons $\beta$ la
fonction $\frac{1}{\mu(U_{\varepsilon})}
{\bf 1}_{U_{\varepsilon}}$. On pose maintenant : $$\bar \alpha
(g_{\infty},g_f)=\beta(g_{\infty}) {\bf 1}_U(g_f)$$ $$\textrm{et } \displaystyle
\alpha(g) = (\mu\otimes\lambda)(\GG) \sum_{\g \in \G} \bar\alpha(g\g) \;.$$

Par construction, $\bar\alpha$ est une fonction sur $G$, $U$-invariante de
support $U_\varepsilon \times U \subset G_U$ et on a $\int_G \bar \alpha
\d{d}(\mu\otimes\lambda)=1$. On en d\'eduit
que $\alpha$ est une fonction dans $L^2(\GG)$, $U$-invariante, d\'efinie sur
$\pi(G_U)$ et d'int\'egrale par rapport \`a $m$ \'egale \`a 1.

Soit $(x,y)\in
(\GG)^2$, tel que $\alpha(x)\alpha(y)\neq 0$. Alors $x$ et $y$
appartiennent \`a $\pi(U_\varepsilon\times U)$, c'est-\`a-dire qu'ils
s'\'ecrivent
$x=u u_\varepsilon \G$ et $y=v v_\varepsilon \G$ avec $u_\varepsilon$ et
$v_\varepsilon$ dans $U_\varepsilon$, et $u$ et $v$ dans $U$. Ainsi en utilisant
l'\'egalit\'e \ref{F} et le fait qu'appartenir \`a $G_n$ ne d\'epend pas de la
partie archim\'edienne d'un \'el\'ement, on a : $$F_n(x,y) =
F_n(u_\varepsilon,v_\varepsilon)=\sum_{\g\in\G}\varphi(u_\varepsilon\g
v_\varepsilon) {\bf 1}_{G_n}(\g)\;.$$ Par d\'efinition de $U_\varepsilon$, on a
alors : $$|F_n(x,y)-F_n(e,e)|\leq \varepsilon
\d{Card}(\G\cap G_n) \;.$$
Cela implique :
$$|F_n(e,e) - \int_{\GG} \int_{\GG} F_n(x,y) \alpha(x)\alpha(y)
\d{d}m(x)\d{d}m(y)|
\leq \varepsilon \d{Card}(\G\cap G_n) \;.$$

Pour fixer les notations, fixons $X$ un relev\'e de $\GG$ dans $G$. On note
$\tilde \alpha$ le relev\'e de $\alpha$ : $\tilde \alpha = \alpha \circ \pi$, et
on note $\tilde m = \frac{(\mu \otimes
\la)}{(\mu \otimes \la)(X)}$. Enfin notons $I_{n,\varepsilon}$ l'int\'egrale :
$$I_{n,\varepsilon}=\int_{\GG} \int_{\GG} F_n(x,y) \alpha(x)\alpha(y)
\d{d}m(x)\d{d}m(y)$$ On fait alors
le calcul suivant :

\begin{eqnarray*}
I_{n,\varepsilon} & = & \int_{X}
\int_{X} F_n(x,y) \tilde\alpha(x)\tilde\alpha(y) \d{d}\tilde m(x)\d{d}\tilde
m(y) \\
 & = & \int_{X} \int_{X} \sum_{\g \in \G} f(x\g y^{-1}) {\bf 1}_{G_n} (x\g
y^{-1})
\tilde\alpha(x)\tilde\alpha(y) \d{d}\tilde m(x)\d{d}\tilde m(y)
\end{eqnarray*}
On fait pour tout $\g \in \G$ le changement de variable $x\g=g$, ce qui permet
d'obtenir :
\begin{eqnarray*}
I_{n,\varepsilon} & = & \int_G \int_X f(gy^{-1}) {\bf 1}_{G_n}(g y^{-1})
\tilde\alpha(g)\tilde\alpha(y)\d{d}\tilde m(g)\d{d}\tilde m(y)
\end{eqnarray*}
On fait maintenant le changement de variables $h=gy^{-1}$ pour tout $g\in G$.
On obtient finalement :
\begin{eqnarray*}
I_{n,\varepsilon}
 & = & \int_G f(h){\bf 1}_{G_n}(h) \int_X
\tilde\alpha(hy)\tilde\alpha(y)\d{d}\tilde m(y)\d{d}\tilde m(h)
\end{eqnarray*}

Consid\'erons le coefficient matriciel $\int_X
\tilde\alpha(hy)\tilde\alpha(y)\d{d}\tilde m(y)= \langle h.\alpha,\alpha\rangle
$ de la
repr\'esentation de $G$ dans $L^2(\GG)$.

On remarque tout d'abord que si $h$ n'appartient pas \`a $G_U$, et
$x \in \GG$ appartient \`a $\pi(G_U)$, l'\'el\'ement $hx$ de $\GG$ n'appartient
pas \`a $\pi(G_U)$ ; comme le support de $\alpha$ est inclus dans $\pi(G_U)$, on
a alors $(h.\alpha)(x)=\alpha(hx)=0$. Cela signifie que le coefficient
$\langle h.\alpha,\alpha\rangle $ est nul d\`es que $h\not\in G_U$. C'est ainsi
qu'appara\^issent les ensembles $G_n\cap G_U$ :
\begin{eqnarray*}
I_{n,\varepsilon}
 & = & \int_G f(h){\bf 1}_{G_n\cap G_U}(h) \langle h.\alpha,\alpha\rangle
\d{d}\tilde m(h)
\end{eqnarray*}

Ensuite, nous appliquons le th\'eor\`eme de d\'ecroissance \`a ce
coefficient matriciel. On utilise \`a nouveau le fait que $\alpha$ est d\'efinie
sur $\pi(G_U)$ et aussi qu'elle est $U$-invariante et d'int\'egrale $1$. En
effet, cela permet d'appliquer le point 4 du lemme \ref{lem:dec} et on obtient :
$$\langle h.\alpha,\alpha\rangle  \xrightarrow[h \in G_U]{h \to \infty}
1\;.$$
Ainsi il existe une partie compacte $C$ de $G_U$, tel que
pour tout $h \in G_U - C$, on a la majoration $|\langle h.\alpha,\alpha\rangle
-1 | \leq
\varepsilon$. De plus, par d\'efinition de
$f$, on a :
$$\int_G f(h){\bf 1}_{G_n\cap G_U}(h)\d{d}\tilde m(h)=\frac{(\mu
\otimes \la)(G_n\cap G_U)}{(\mu\otimes\la)(\GG)}
\int_{G^{\infty}}\varphi \d{d}\mu$$
Et alors, pour $n$ suffisamment
grand, il existe une constante $A$ telle que :
\begin{eqnarray*}
|I_{n,\varepsilon}- \frac{(\mu
\otimes \la)(G_n\cap G_U)}{(\mu\otimes\la)(\GG)}
\int_{G^{\infty}}\varphi \d{d}\mu| & \leq  \varepsilon\frac{(\mu
\otimes \la)(G_n\cap
G_U)}{(\mu\otimes\la)(\GG)}\int_{G^{\infty}}\varphi \d{d}\mu +A \;.\\
\end{eqnarray*}
On utilise maintenant le fait que le volume $(\mu\otimes\lambda)(G_n\cap G_U)$
tend vers l'infini (voir le
lemme \ref{lem:volume}) pour obtenir :
\begin{eqnarray*}
|I_{n,\varepsilon}- \frac{(\mu
\otimes \la)(G_n\cap G_U)}{(\mu\otimes\la)(\GG)}
\int_{G^{\infty}}\varphi \d{d}\mu|
& \leq & \varepsilon\frac{2(\mu
\otimes \la)(G_n\cap
G_U)}{(\mu\otimes\la)(\GG)} \int_{G^\infty} \varphi \d{d}\mu \; .\\
\end{eqnarray*}
On en d\'eduit que pour $n$ grand, on a l'in\'egalit\'e :
\begin{flushleft}
$\displaystyle |F_n(e,e)-\frac{(\mu
\otimes \la)(G_n\cap G_U)}{(\mu\otimes\la)(\GG)} \int_{G^{\infty}}\varphi
\d{d}\mu| \leq$ \end{flushleft}
\begin{flushright} $\displaystyle \left(\frac{2(\mu
\otimes \la)(G_n\cap
G_U)}{(\mu\otimes\la)(\GG)}\int_{G^{\infty}}\varphi
\d{d}\mu+\d{Card}(\G\cap G_n)\right)\varepsilon$ .\end{flushright}
C'est \`a dire qu'on a pour tout $\varepsilon> 0$ :
\begin{eqnarray*}
\limsup_{n\to \infty} \frac{(\mu \otimes \la) (\GG)}{(\mu
\otimes \la)(G_n \cap
G_U)}(F_n(e,e)-\varepsilon \d{Card}(\G\cap G_n)) & \leq &
(1 + 2\varepsilon)\int_{G^{\infty}}\varphi \d{d}\mu\\
\liminf_{n\to \infty} \frac{(\mu \otimes \la) (\GG)}{(\mu
\otimes \la)(G_n \cap
G_U)}(F_n(e,e)+\varepsilon \d{Card}(\G\cap G_n)) &
\geq & (1 - 2\varepsilon)\int_{G^{\infty}}\varphi \d{d}\mu\\
\end{eqnarray*}

On conclut en deux \'etapes : tout d'abord, on applique les deux
in\'egalit\'es pr\'ec\'edentes \`a $\varphi=1$, auquel cas
$F_n(e,e)=\d{Card}(\G\cap G_n)$. On en d\'eduit (en faisant tendre $\varepsilon$
vers 0) que :
$$\frac{(\mu \otimes \la) (\GG)}{(\mu
\otimes \la)(G_n \cap G_U)}\d{Card}(\G\cap G_n)
\xrightarrow{n\to \infty} 1 \;.$$ Enfin, on utilise ce r\'esultat pour traiter
le
cas g\'en\'eral. Pour tout $\varepsilon > 0$, on a :

\begin{eqnarray*}
\limsup_{n\to \infty} \frac{(\mu \otimes \la) (\GG)}{(\mu
\otimes \la)(G_n \cap
G_U)}F_n(e,e) & \leq & (1+2\varepsilon)\int_{G^{\infty}}\varphi
\d{d}\mu + \varepsilon\\
\liminf_{n\to \infty} \frac{(\mu \otimes \la) (\GG)}{(\mu\otimes\la)(G_n \cap
G_U)}F_n(e,e) & \geq & (1-2\varepsilon)\int_{G^{\infty}}\varphi \d{d}\mu -
\varepsilon\\
\end{eqnarray*}

On peut maintenant faire tendre $\varepsilon$ vers 0 pour obtenir la
limite voulue. Ainsi, le th\'eor\`eme \ref{the:appl2} est prouv\'e, et donc
aussi le th\'eor\`eme \ref{the:appl}.
\end{proof}

\section{Cas des groupes unitaires}\label{uni}

Nous prouvons dans cette partie le th\'eor\`eme \ref{the:equiuni}, et donc le
th\'eor\`eme \ref{coro:existenceuni}. Nous reprenons
les notations donn\'ee dans l'introduction.

\begin{proof}[D\'emonstration du th\'eor\`eme \ref{the:equiuni}]
On applique le th\'eor\`eme \ref{the:appl2} dans le cas suivant : le groupe
$\bG$ est le $\dr Q$-groupe $\d{SU}(h)$. On note qu'il v\'erifie bien les
hypoth\`eses du th\'eor\`eme et de plus qu'il est simplement connexe
\cite[Paragraphe 2.3.3]{platonov-rapinchuk}. On choisit
$\G=\d{SU}(h,\dr Q)$, et
$U$ le produit pour $p$ premier des sous-groupes compacts ouverts $\d{SU}(h,\dr
Z_p)$.

Pour tout $p$ premier et $r$ entier, on note $L_{p^r}$ le sous-ensemble
de $\bG(\dr Q_p)$ compos\'e des matrices telles que le supremum de la valeur
absolue des coefficients est $p^r$. Enfin pour tout $n$ entier avec
$\displaystyle n=\prod_{p \; \d{premier}} p^{\nu_p(n)}$, on note $\displaystyle
H_n=\prod_{p \; \d{premier}} L_{p^{\nu_p(n)}}$.

On remarque alors que un entier $n$ appartient \`a $\mathcal U_l(H)$ si et
seulement si $H_n$ est non vide. De plus, pour toute matrice $M$ de $\d{M}(h,\dr
Z[i])$, $M$ est dans $\mathcal T(n,H,\dr Z)$ si et seulement si
$\frac{1}{n}M$ appartient \`a $\bG(\dr R)\times H_n$. C'est \`a dire que les
ensembles $\G_n$ d\'efinis dans l'\'enonc\'e du th\'eor\`eme sont bien \'egaux
\`a $\G\cap G_n$.

Pour pouvoir appliquer le th\'eor\`eme \ref{the:appl2}, il ne reste plus qu'\`a
prouver que les ensembles $G_n\cap G_U$ sont distincts. Or on remarque que,
pour $n$ dans $\mathcal U_l(H)$, les $H_n$ sont disjoints donc distincts. Et il
en est de m\^eme des $G_n$. Il suffit alors de v\'erifier que $G_U=G$.
Le lemme suivant appliqu\'e \`a $L=G_U$ permet de conclure :

\begin{lemm}\label{lem:simpconn}
Soit $\bG$ un $\dr Q$-groupe, presque-$\dr Q$-simple et
simplement connexe. Alors tout sous-groupe $L$ ferm\'e normal, contenant
$\bG(\dr Q)$ et
d'indice fini dans $\bG(\dr A)$ est \'egal \`a $\bG(\dr A)$.
\end{lemm}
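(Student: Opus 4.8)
The statement to prove is Lemma~\ref{lem:simpconn}: if $\bG$ is a simply-connected, almost-$\dr Q$-simple $\dr Q$-group, then any closed normal finite-index subgroup $L \subseteq \bG(\dr A)$ containing $\bG(\dr Q)$ is all of $\bG(\dr A)$. The plan is to exploit strong approximation together with the known structure of $\bG(\dr A)$, rather than to argue directly with $L$.

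First I would reduce to the case where $\bG$ is \emph{absolutely} almost-simple, exactly as in the remark following Theorem~\ref{the:appl2}: by restriction of scalars, an almost-$\dr Q$-simple simply-connected group is $\mathrm{Res}_{L/\dr Q}\mathbf H$ with $\mathbf H$ absolutely almost-simple and simply connected over a number field $L$, and $\bG(\dr A_{\dr Q}) \cong \mathbf H(\dr A_L)$, $\bG(\dr Q) \cong \mathbf H(L)$, so it suffices to treat $\mathbf H$. Second, I would recall that since $G^\infty$ is compact (the archimedean places contribute nothing to strong approximation, or rather: the group is $\dr R$-anisotropic in our situation but one must be careful — in fact the only real place-type hypothesis one needs is that $\bG$ is simply connected and, for strong approximation, isotropic at some place or at an archimedean place; here $\bG$ is $\dr Q$-anisotropic so one uses the non-archimedean isotropy coming from the hypotheses). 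The key input is strong approximation (\cite[Paragraphe 7.2]{platonov-rapinchuk}): if $S$ is a finite set of places containing one at which $\bG$ is isotropic, then $\bG(\dr Q)$ is dense in $\bG(\dr A^S)$, the $S$-adeles away from $S$.

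The heart of the argument is then: let $N_U = [\bG(\dr A) : L]$ be the finite index, and consider the open subgroup $L^\circ$ generated by $\bG(\dr Q)$ and the subgroups $\bG(\dr Q_p)^+$ (generated by unipotents) for all $p$, together with $\bG(\dr R)$. Since $\bG$ is simply connected and isotropic at the relevant places, each $\bG(\dr Q_p)^+$ has no nontrivial finite-index subgroup (the Kneser--Tits phenomenon / projective simplicity of $\bG(\dr Q_p)^+$ modulo center, but for simply-connected isotropic $p$-adic groups $\bG(\dr Q_p) = \bG(\dr Q_p)^+$ has no proper finite-index subgroups), so $\bG(\dr Q_p)^+ \subseteq L$ for every $p$ where $\bG$ is isotropic; at the finitely many anisotropic $p$ the local group $\bG(\dr Q_p)$ is compact and one handles it by a counting/index argument together with density. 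By strong approximation, $\bG(\dr Q)$ times the product of the $\bG(\dr Q_p)^+$ over $p \notin S$ is dense in $\bG(\dr A)$; since $L$ is closed and contains this dense set (for suitable $S$), $L = \bG(\dr A)$. Equivalently and more cleanly: $\bG(\dr A)/\overline{\bG(\dr Q)\bG(\dr A)^+}$ is trivial by strong approximation for simply connected groups, where $\bG(\dr A)^+ = \prod'_v \bG(\dr Q_v)^+$; any closed normal finite-index $L \supseteq \bG(\dr Q)$ contains $\bG(\dr A)^+$ because $\bG(\dr A)^+$ is its own commutator-type subgroup with no finite quotients at the isotropic places, hence $L \supseteq \overline{\bG(\dr Q)\bG(\dr A)^+} = \bG(\dr A)$.

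The main obstacle I anticipate is the careful treatment of the places $v$ where $\bG$ is anisotropic — including the archimedean place, where $\bG(\dr R) = G^\infty$ is compact and therefore \emph{does} admit proper finite-index subgroups if it is disconnected. Here one must use that $\bG$ is simply connected to conclude $\bG(\dr R)$ is connected (a simply-connected semisimple real group is connected in the Hausdorff topology, by a theorem of Cartan/Matsumoto), so it contributes nothing problematic; and at the finitely many non-archimedean anisotropic $v$, $\bG(\dr Q_v)$ is compact and a priori disconnected, but the combination of normality of $L$, its finite index, and density of $\bG(\dr Q)$ in the complementary adelic factors forces $L$ to project onto $\bG(\dr Q_v)$ there as well. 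Organizing this so that strong approximation applies with a set $S$ that simultaneously contains an isotropic place and controls all the bad places is the delicate bookkeeping; everything else is a direct appeal to \cite[Théorème 7.12 / Paragraphe 7.2]{platonov-rapinchuk} and to the non-existence of proper finite-index subgroups in $\bG(\dr Q_p)^+$ for simply connected isotropic $p$-adic $\bG$.
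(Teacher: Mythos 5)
Your proposal is correct and is essentially the paper's own argument: at a finite place $p$ where $\bG$ is isotropic, simple connectedness gives generation of $\bG(\dr Q_p)$ by unipotent elements, hence no proper finite-index subgroups, so $L \supseteq \bG(\dr Q_p)$; then strong approximation and the closedness of $L$ give $L = \bG(\dr A)$. The ``delicate bookkeeping'' at anisotropic places that you anticipate as the main obstacle is in fact unnecessary --- a single isotropic prime suffices, since $\bG(\dr Q_p)\bG(\dr Q)$ is already dense in $\bG(\dr A)$, which is precisely your own cleaner reformulation at the end and is exactly how the paper concludes.
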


\begin{proof}
En effet, soit $p$ un nombre premier tel que $\bG(\dr Q_p)$
est isotrope (un tel $p$ existe, c'est m\^eme le cas pour presque tous les $p$
\cite[Th\'eor\`eme 6.7]{platonov-rapinchuk}). Alors, par hypoth\`ese de simple
connexit\'e \cite[Paragraphe 7.2]{platonov-rapinchuk}, le groupe $\bG(\dr Q_p)$
est engendr\'e par ses
\'el\'ements unipotents et notamment ne contient pas de
sous-groupe d'indice fini diff\'erent de lui-m\^eme. Donc le groupe
$L \cap \bG(\dr Q_p)$ (ici, on a plong\'e de fa\c con naturelle $\bG(\dr Q_p)$
dans $\bG(\dr A)$) est \'egal \`a $\bG(\dr Q_p)$.

Ensuite, par la propri\'et\'e d'approximation forte \cite[Th\'eor\`eme
7.12]{platonov-rapinchuk}, le produit $\bG(\dr Q_p)\bG(\dr Q)$ est dense dans
$\bG(\dr A)$. Donc $L=\bG(\dr A)$.
\end{proof}

Cela finit la preuve des
th\'eor\`emes \ref{the:equiuni} et
\ref{coro:existenceuni}.
\end{proof}

\section{Cas des groupes orthogonaux}\label{orth}

Nous nous int\'eressons dans cette partie au cas des groupes orthogonaux.
Rappelons tout d'abord que les groupes orthogonaux ne sont pas simplement
connexes \cite[Paragraphe 2.3.2, Proposition 2.14]{platonov-rapinchuk},
donc le lemme \ref{lem:simpconn} ne s'applique pas.

De fait, la question du passage du local au global pour les formes quadratiques
\`a coefficients entiers a \'et\'e beaucoup \'etudi\'e et le th\'eor\`eme
cit\'e au d\'ebut de ce texte donne une r\'eponse satisfaisante dans les cas
o\`u le rang vaut au moins $5$. Esquissons, dans les grandes lignes, la
strat\'egie pour prouver ce th\'eor\`eme :

On dit qu'une forme quadratique $q$ d\'efinie positive
repr\'esente (resp. repr\'esente localement) un entier naturel $n$ si $n$
appartient \`a $q(\dr Z)$ (resp. \`a $q(\dr Z_p)$ pour tout $p$ premier).
Nous rappelons aussi la d\'efinition du genre d'une forme quadratique $q$.
C'est l'ensemble des formes quadratiques \'equivalentes \`a $q$ \`a la fois sur
$\dr Q$ et sur tous les $\dr Z_p$ :

\begin{defi}\label{genre}
Soient $q$ et $q'$ des formes quadratiques \`a coefficients entiers
de rang $k$, associ\'ees respectivement aux matrices $Q$ et $Q'$. On dit
qu'elles sont dans le m\^eme genre si elles v\'erifient les propri\'et\'es
suivantes :
\begin{itemize}
 \item il existe $g \in \d{GL}(k,\dr Q)$ tel que $Q'={}^tgQg$.
 \item pour tout nombre premier $p$, il existe un \'el\'ement $g_p \in
\d{GL}(k,\dr
Z_p)$
tel que $Q'={}^tg_pQg_p$
\end{itemize}
\end{defi}

On prouve alors que si un entier est repr\'esent\'e localement par une forme
quadratique $q$, il existe une forme dans le genre de $q$ qui le repr\'esente
\cite[Chap. 9, Th\'eor\`eme 1.3]{cassels}. Ensuite,
on d\'emontre, du moins
quand le rang est au moins $5$, que toutes les formes d'un m\^eme genre
repr\'esentent les m\^emes entiers suffisamment grands. Cette derni\`ere \'etape
ne fonctionne pas en toute g\'en\'eralit\'e en rang $4$, et pas du tout en rang
$3$, o\`u il faut introduire le concept de
genre-spin. Nous ne d\'ecrirons pas davantage ces th\'eories et renvoyons \`a
l'article de W. Duke \cite{duke-survey} pour une pr\'esentation historique de
ce probl\`eme, ainsi qu'\`a l'article de W. Duke et R. Schulze-Pillot
\cite{duke-schulze} pour l'analyse du cas de rang 3.

\bigskip

Pr\'esentons maintenant les r\'esultats que nous obtenons : fixons une
forme quadratique $q$ rationnelle d\'efinie positive de rang $k\geq 3$, de
matrice associ\'ee $Q$.
Pour tout entier $n$ et pour $A=\dr Z$ ou $\dr Z_p$, notons $\mathcal
S(n,q,A)$ l'ensemble des matrices $M\in \d{M}(k,A)$ de d\'eterminant $n^k$
telles que $\frac{1}{n}M$ est de d\'enominateur $n$, c'est-\`a-dire :
\begin{itemize}
\item les coefficients de $M$ sont premiers entre eux,
\item la matrice $M$ est solution de l'\'equation $(F_n)$ : ${}^t MQM= n^2 Q$.
\end{itemize}

Notons $\mathcal R_l(q)$ l'ensemble des entiers $n$ tels que pour tout nombre
premier $p$, $\mathcal S(n,q,\dr Z_p)$ est non vide. On notera de plus
$\mathcal
R_{\d{genre}}(q)$ l'ensemble des entiers $n$ tels qu'il existe une forme $q'$
dans
le genre de $q$ avec $\mathcal S(n,q', \dr Z)$ non vide.

De la m\^eme mani\`ere que dans le cas unitaire, nous cherchons des matrices
dans $\mathcal S(n,q,\dr Z)$, et \`a comprendre l'image de cet ensemble dans
$\d{SO}(q,\dr R)$.

Dans le cas des formes de rang au moins $5$, nous prouvons avec le
th\'eor\`eme \ref{the:equiorth} que si $n$ est dans $\mathcal R_l(q)$ avec en
plus la
condition que $n$ est premier \`a un certain entier fix\'e, et que $n$ est
suffisamment grand, alors $\mathcal S(n,q,\dr Z)$ est non vide, et son image
dans $\d{SO}(q,\dr R)$ s'\'equir\'epartit vers la mesure de Haar. Le concept de
genre n'intervient pas dans cette partie.

Ensuite, nous essayons de suivre une strat\'egie parall\`ele \`a celle
\'evoqu\'ee plus haut. Nous prouvons, cette fois sans restriction sur le rang,
que, si $n$ est dans $\mathcal R_{\d{genre}}(q)$ et suffisamment grand, alors
$\mathcal S(n,q,\dr Z)$ est non vide, et son image s'\'equir\'epartit dans
$\d{SO}(q,\dr R)$. C'est l'objet du th\'eor\`eme \ref{the:genre}.

\subsection{Formes de rang sup\'erieur \`a 5}

Commen\c cons par le cas du rang sup\'erieur \`a 5 :

\begin{theo}\label{the:equiorth}
Soient $k \geq 5$, $q$ une $\dr Q$-forme quadratique d\'efinie positive
sur $\dr
Q^k$ et $\mu$ la probabilit\'e de Haar sur $\d{SO}(q,\dr R)$. Pour tout entier
$n$, notons $\G_n$ l'ensemble des matrices de $\d{SO}(q,\dr Q)$ de
d\'enominateur
$n$.

Alors il existe un entier $N$ tel que quand $n$ tend vers l'infini et que $n$
est premier \`a $N$, les $\G_n$
s'\'equir\'epartissent dans $\d{SO}(q,\dr R)$, c'est-\`a-dire :

$$\frac{1}{\d{Card} (\G_n)} \sum_{\g \in \G_n} \delta_{\g} \xrightarrow[n
\textrm{ premier \`a } N]{n \to \infty} \mu\; .$$
\end{theo}

\begin{proof}

Nous allons \`a nouveau appliquer le th\'eor\`eme \ref{the:appl2} en
consid\'erant le groupe $\bG=\d{SO}(q)$, qui en v\'erifie bien les hypoth\`eses.
On pose pour tout nombre premier $p$, $U_p=\d{SO}(q,\dr Z_p)$, et on
note $U$ le produit sur $p$ des $U_p$.

Soit, pour $p$ premier et $m$ entier, $\tilde H_{p^m}$ l'ensemble des
matrices de $\d{SO}(q,\dr Q_p)$ telles que le maximum de la norme $p$-adique des
coefficients est $p^m$. Maintenant, pour un entier $n$, pour tout nombre
premier $p$,
on note $\nu_p(n)$ la valuation $p$-adique de $n$. On pose alors $H_n$ le
produit sur les $p$ premiers des $\tilde H_{p^{\nu_p(n)}}$. Ces ensembles $H_n$
sont bi-$U$-invariants et deux \`a deux disjoints.

On v\'erifie alors que, pour tout $n$, l'ensemble $\G_n$ est exactement
$\d{SO}(q,\dr Q) \cap G_n$.

\bigskip

Il nous faut maintenant d\'eterminer un ensemble fini $F$ de nombres premiers
tel que si $n$ est premier aux \'el\'ements de $F$, alors $G_n \cap G_U$ est
non vide. Il suffira alors de choisir pour $N$ le produit des nombres premiers
dans $F$. Pour cela, on commence par un lemme de r\'eduction des formes
quadratiques :

\begin{lemm}\label{rang2}
Soient $k \geq 5$, $q$ une $\dr Q$-forme quadratique d\'efinie positive sur $\dr
Q^k$. Alors il existe un ensemble fini $F$ de nombres
premiers tels que pour tout nombre premier $p$ en dehors de $F$, on a :

la forme $q$ est conjugu\'ee par une matrice de $\d{\d{GL}}(k,\dr
Z_p)$ \`a une forme quadratique de la forme $q'(x_1,\ldots ,x_k)=x_1 x_2 + x_3
x_4 + q''(x_5,\ldots, x_k)$.
\end{lemm}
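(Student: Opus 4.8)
The statement to prove is Lemma \ref{rang2}: given $k \geq 5$ and a positive definite $\mathbb{Q}$-quadratic form $q$ on $\mathbb{Q}^k$, there is a finite set $F$ of primes so that for $p \notin F$, $q$ is $\mathrm{GL}(k,\mathbb{Z}_p)$-equivalent to $x_1x_2 + x_3x_4 + q''(x_5,\dots,x_k)$.

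Key facts I'd use:
1. Over $\mathbb{Q}_p$, for $p$ odd and not dividing the discriminant, a quadratic form with $p$-adic unit coefficients (after scaling) in $\geq 3$ variables is isotropic — this is classical (e.g. using Chevalley-Warning to get a nontrivial zero mod $p$ and Hensel lifting). More generally, a unimodular $\mathbb{Z}_p$-lattice of rank $\geq 3$ is isotropic for $p$ odd not dividing something.
2. An isotropic unimodular $\mathbb{Z}_p$-quadratic lattice splits off a hyperbolic plane $H = \langle x_1, x_2\rangle$ with $x_1x_2$, orthogonally, and the orthogonal complement is again unimodular. This is the $\mathbb{Z}_p$-analogue of Witt decomposition — the key point is that over $\mathbb{Z}_p$ (for $p$ odd, or handled carefully) unimodular lattices have an orthogonal splitting theory.
3. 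A unimodular $\mathbb{Z}_p$-lattice, being a $\mathbb{Z}_p$-lattice with nondegenerate reduction mod $p$... actually over $\mathbb{Z}_p$ all unimodular lattices of a given rank are classified and for $p$ odd the isometry class is determined by rank and discriminant (a unit mod squares).

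So the plan: choose $F$ to contain $2$ and all primes dividing $2\det(Q)$ (where $Q$ is an integral matrix for a suitable integral scaling of $q$). For $p \notin F$, the form $q$ (scaled by a unit, which doesn't change the $\mathrm{GL}(k,\mathbb{Z}_p)$-equivalence class up to also rescaling — need to be careful, actually we want equivalence not similarity; but the $L_{p^m}$ sets and the equation $(F_n)$ are about $\mathrm{SO}(q)$ so perhaps the scaling is fine, or one absorbs it) becomes a unimodular $\mathbb{Z}_p$-lattice $(M, q)$ of rank $k \geq 5$. Since $k \geq 3 > 2$, $q$ is isotropic over $\mathbb{Q}_p$, and since the lattice is unimodular one can find a primitive isotropic vector; pairing it with a suitable partner gives a hyperbolic plane $H_1 = x_1x_2$ splitting off orthogonally with unimodular complement of rank $k - 2 \geq 3$. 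Repeat once: the complement is still isotropic (rank $\geq 3$), split off $H_2 = x_3x_4$, leaving a unimodular form $q''$ in $k - 4 \geq 1$ variables. That gives $q \cong_{\mathbb{Z}_p} x_1x_2 + x_3x_4 + q''$.

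\begin{proof}[Esquisse de preuve du lemme \ref{rang2}]
Quitte \`a multiplier $q$ par un entier convenable et \`a changer de base enti\`ere, on peut supposer que $Q \in \d{M}(k,\dr Z)$. Soit $F$ l'ensemble (fini) des nombres premiers divisant $2\det(Q)$. Fixons un nombre premier $p \notin F$. Alors le r\'eseau $(\dr Z_p^k, q)$ est unimodulaire sur $\dr Z_p$ et $p$ est impair, si bien que sa r\'eduction modulo $p$ est une forme quadratique non d\'eg\'en\'er\'ee sur $\dr F_p$ en $k \geq 5$ variables.

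\textbf{\'Etape 1 : isotropie.} Une forme quadratique non d\'eg\'en\'er\'ee en au moins $3$ variables sur le corps fini $\dr F_p$ repr\'esente z\'ero de mani\`ere non triviale (th\'eor\`eme de Chevalley-Warning, ou comptage direct des solutions). Soit $\bar v \in \dr F_p^k$ un vecteur isotrope non nul ; on le rel\`eve en un vecteur primitif $v \in \dr Z_p^k$. Comme la forme bilin\'eaire associ\'ee est unimodulaire, il existe $w \in \dr Z_p^k$ avec $q(v,w) = 1$ ; le lemme de Hensel (appliqu\'e \`a $t \mapsto q(v + tw)$) permet de modifier $w$ pour obtenir $q(v) = 0$ exactement. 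Le plan $H_1 = \dr Z_p v \oplus \dr Z_p w$ est alors un plan hyperbolique, facteur direct orthogonal : $\dr Z_p^k = H_1 \perp H_1^{\perp}$, et $H_1^{\perp}$ est \`a nouveau un r\'eseau unimodulaire sur $\dr Z_p$, de rang $k - 2 \geq 3$. Dans une base adapt\'ee, $q|_{H_1}$ s'\'ecrit $x_1 x_2$.

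\textbf{\'Etape 2 : it\'eration.} On applique le m\^eme argument \`a $H_1^{\perp}$, qui est unimodulaire de rang $k - 2 \geq 3$, donc isotrope : on en extrait un plan hyperbolique orthogonal $H_2$, avec $q|_{H_2} = x_3 x_4$ dans une base convenable, et compl\'ementaire orthogonal $q''$ unimodulaire de rang $k - 4 \geq 1$. On obtient finalement une base de $\dr Z_p^k$ dans laquelle $q = x_1 x_2 + x_3 x_4 + q''(x_5,\ldots,x_k)$, c'est-\`a-dire que $q$ est conjugu\'ee par une matrice de $\d{GL}(k,\dr Z_p)$ \`a une forme du type annonc\'e.

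Le point d\'elicat est le contr\^ole uniforme en $p$ : il faut v\'erifier qu'en dehors de l'ensemble fini $F$ ci-dessus, aucune obstruction $2$-adique ou li\'ee au discriminant n'intervient, ce qui est pr\'ecis\'ement assur\'e par le choix de $F$ ; les \'etapes $1$ et $2$ sont alors purement formelles et se d\'eroulent uniform\'ement pour tous les $p \notin F$.
\end{proof}
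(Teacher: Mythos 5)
Your proof is correct, but it produces the two hyperbolic planes by a genuinely different mechanism than the paper. The paper first diagonalizes $q$ over $\dr Q$ and puts into $F$ the prime $2$, the primes where the diagonalizing matrix leaves $\d{GL}(k,\dr Z_p)$, and those where the diagonal coefficients fail to be units; it then treats a three-variable diagonal unit form explicitly, using that $\dr Z_p^*/(\dr Z_p^*)^2$ has two elements and that every unit is a sum of two squares (Hensel), and writes down by hand a basis in $\d{GL}(3,\dr Z_p)$ exhibiting $xy+\beta z^2$, applying this twice. You instead invoke the structure theory of unimodular quadratic $\dr Z_p$-lattices: Chevalley--Warning gives an isotropic vector of the reduction mod $p$, Hensel lifts it to a primitive isotropic vector, unimodularity furnishes a dual partner, and the hyperbolic plane splits off orthogonally with unimodular complement, which you iterate. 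Your route is more conceptual and skips the preliminary rational diagonalization, at the cost of quoting (or reproving) the splitting lemma for unimodular lattices; the paper's is more elementary and completely explicit, which is in the spirit of how the lemma is then used to build $\varphi_p$ and the matrices $g,u$. Two points to tighten: in Step 1 it is $v$ (not $w$) that you perturb via Hensel applied to $t\mapsto q(v+tw)$, and one should also normalize the partner by $w\mapsto w-q(w)v$ to get exactly $x_1x_2$ on the plane; and your reduction ``quitte \`a multiplier $q$ par un entier $d$'' is only harmless at primes where $d$ is a unit, while a prime dividing $d$ need not divide $\det(dQ_0)$, so you should take $F$ to consist of the primes dividing $2d\det(dQ_0)$ (or observe that scaling by a $p$-adic unit $u$ is absorbed by the base change $x_1\mapsto ux_1$, $x_3\mapsto ux_3$). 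With these routine adjustments your argument is complete.
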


\begin{proof}
La forme $q$ est conjugu\'ee par $A \in \d{\d{GL}}(n,\dr Q)$ \`a une forme
quadratique diagonale $\bar q$. Soit $F$ l'ensemble des nombres premiers $p$
tels que ou bien $p=2$ ou bien $A$ n'appartient pas \`a $\d{GL}(k,\dr Z_p)$ ou
bien $\bar
q$ n'est pas \`a coefficients dans $\dr Z_p^*$ pour la base canonique. Alors,
pour tout $p \not\in F$, $q$ est conjugu\'e sur $\d{GL}(k,\dr Z_p)$ \`a une
forme quadratique diagonale \`a coefficients dans $\dr Z_p^*$.

Soit $r$ une forme quadratique diagonale en trois variables \`a coefficients
dans $\dr Z_p^*$. L'ensemble $\dr Z_p^* /(\dr Z_p^*)^2$ est compos\'e de deux
\'el\'ements. Donc, $r$ est \'equivalente sur $\dr Z_p^*$ \`a
$\alpha(x^2+y^2)+\beta z^2$, pour un
$\alpha \in \dr Z_p^*$ et un $\beta \in \dr Z_p^*$. De plus tout \'el\'ement de
$\dr Z_p^*$ s'\'ecrit comme somme de deux carr\'es par le lemme de Hensel
\cite[Chap II, section 2.2]{serre} (on utilise ici $p\neq 2$), donc il existe
$a$ et $b$ dans $\dr Z_p^*$ tels que $\alpha (a^2 + b^2)=-\beta$. Alors, dans
la base $\left((a,b,1),\beta^{-1}(-b,a,1),(a-b,a+b,1)\right)$, la forme $r$
s'\'ecrit $xy+\beta z^2$. Par construction la matrice qui conjugue $r$ \`a cette
derni\`ere forme est bien dans $\d{GL}(k,\dr Z_p)$.

Pour la forme $q$ diagonale en au moins cinq variables, on peut appliquer le
proc\'ed\'e ci-dessus deux fois pour obtenir le r\'esultat voulu : pour tout
nombre premier $p$ impair, toute forme quadratique $r$ diagonale
sur $\dr Q_p^5$ \`a coefficients dans $\dr Z_p^*$  est \'equivalente sur $\dr
Z_p$ \`a une forme quadratique $y_1 y_2 + +y_3y_4 + \alpha y_5^2$ pour un
$\alpha \in \dr Z_p^*$.
\end{proof}

Pour tout $p \not\in F$, on note  $\varphi_p$ l'isomorphisme entre $\d{SO}(q,\dr
Q_p)$ et $\d{SO}(q',\dr Q_p)$ donn\'e par le lemme pr\'ec\'edent et pour
tout $m$
entier, $J_{p^m}$ l'ensemble des matrices de
$\d{SO}(q',\dr Q_p)$ telles que le maximum de la norme $p$-adique des
coefficients est $p^m$. Comme le changement de base est \`a coefficients dans
$\dr Z_p$, on obtient imm\'ediatement le lemme  :

\begin{lemm}
Pour tout $p\not\in F$, pour tout $m\in \dr N$, on a $\varphi_p(\tilde
H_{p^m})=J_{p^m}$.
\end{lemm}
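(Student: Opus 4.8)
The plan is to exploit the fact that $\varphi_p$ is induced by a linear change of coordinates with matrix in $\d{GL}(k,\dr Z_p)$. Write $g_p \in \d{GL}(k,\dr Z_p)$ for the matrix conjugating $q$ to $q'$, so that $\varphi_p(M) = g_p^{-1} M g_p$ for $M \in \d{SO}(q,\dr Q_p)$. First I would recall that $\tilde H_{p^m}$ is by definition the set of $M \in \d{SO}(q,\dr Q_p)$ whose coefficients have $p$-adic valuations bounded below by $-m$, with the bound attained; equivalently, writing $|\cdot|_p$ for the $p$-adic absolute value and $\|M\|_p = \max_{i,j} |M_{ij}|_p$ for the sup-norm of the coefficient matrix, $\tilde H_{p^m} = \{ M \in \d{SO}(q,\dr Q_p) : \|M\|_p = p^m \}$, and similarly $J_{p^m} = \{ M' \in \d{SO}(q',\dr Q_p) : \|M'\|_p = p^m \}$.

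The key step is then the observation that conjugation by an element of $\d{GL}(k,\dr Z_p)$ preserves the sup-norm of the coefficient matrix. Indeed, if $g_p \in \d{GL}(k,\dr Z_p)$ then all coefficients of $g_p$ and of $g_p^{-1}$ lie in $\dr Z_p$, so $\|g_p\|_p \leq 1$ and $\|g_p^{-1}\|_p \leq 1$. Since the sup-norm is sub-multiplicative (the coefficients of a product being sums of products of coefficients, and $\dr Z_p$ being an ultrametric ring so that $|\sum a_i b_i|_p \leq \max_i |a_i|_p |b_i|_p$), we get $\|\varphi_p(M)\|_p = \|g_p^{-1} M g_p\|_p \leq \|g_p^{-1}\|_p \|M\|_p \|g_p\|_p \leq \|M\|_p$. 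Applying the same inequality to $\varphi_p^{-1}(M') = g_p M' g_p^{-1}$ gives the reverse inequality $\|M\|_p \leq \|\varphi_p(M)\|_p$ for $M = \varphi_p^{-1}(M')$, hence $\|\varphi_p(M)\|_p = \|M\|_p$ for every $M \in \d{SO}(q,\dr Q_p)$.

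From this equality of norms the statement follows at once: $M \in \tilde H_{p^m}$ if and only if $\|M\|_p = p^m$ if and only if $\|\varphi_p(M)\|_p = p^m$ if and only if $\varphi_p(M) \in J_{p^m}$. Since $\varphi_p$ is a bijection from $\d{SO}(q,\dr Q_p)$ onto $\d{SO}(q',\dr Q_p)$, this says precisely $\varphi_p(\tilde H_{p^m}) = J_{p^m}$.

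I do not expect any real obstacle here; the only point requiring a word of care is the sub-multiplicativity of the sup-norm under the ultrametric inequality, which is what makes the $\d{GL}(k,\dr Z_p)$-integrality of the change of basis do its work (over $\dr Z$ or $\dr R$ one would only get a sub-multiplicativity up to a factor of $k$, but the non-archimedean nature of $|\cdot|_p$ removes this factor). One should also note in passing that $\varphi_p$ indeed maps $\d{SO}(q,\dr Q_p)$ to $\d{SO}(q',\dr Q_p)$ — this is immediate from $Q' = {}^t g_p Q g_p$, since ${}^t(g_p^{-1} M g_p) Q' (g_p^{-1} M g_p) = {}^t g_p \, {}^t M Q M \, g_p = {}^t g_p Q g_p = Q'$ when ${}^t M Q M = Q$ — but this is already implicit in the statement of the previous lemma and so needs no elaboration.
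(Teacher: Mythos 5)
Your proof is correct and is exactly the argument the paper has in mind: the paper dismisses the lemma with the single remark that the change of basis has coefficients in $\d{GL}(k,\dr Z_p)$, and your write-up simply makes explicit the underlying point, namely that conjugation by such a matrix preserves the sup-norm of the coefficients thanks to ultrametric sub-multiplicativity applied to both $g_p$ and $g_p^{-1}$. No gap and no divergence from the paper's route.
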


Rappelons que $G_U$ est d\'efini comme l'intersection des noyaux de l'ensemble
$\Lambda_U$ des caract\`eres $U$ et $\G$-invariants de $G$.
Soit $n$ un entier. On veut d\'emontrer que $G_n\cap G_U$ est non vide.
Supposons
qu'on dispose de $g \in G$ et $u \in U$ tel que $g u g^{-1}$ soit un
\'el\'ement de $G_n$. Alors, pour tout $\lambda \in \Lambda^U$, $\lambda
(gug^{-1})=1$, et donc $gug^{-1} \in G_n \cap G_U$.

On note $\displaystyle N=\prod_{p\in F}p$. On va prouver que l'entier $N$
 convient pour le th\'eor\`eme \ref{the:equiorth} : il
suffit de d\'emontrer que pour tout entier $n$ premier \`a $N$, on peut
trouver une paire $(g,u) \in G\times U$ tel que $gug^{-1}$ est dans $G_n$. Soit
donc $n$ un entier premier \`a $N$.

D'apr\`es la d\'efinition de $H_n$, il
suffit de trouver, pour tout $p$ premier et $m=\nu_p(n)$ entier, une paire
$(g,u) \in \d{SO}(q,\dr Q_p)\times U_p$ telle que $gug^{-1}$ appartient
 \`a $\tilde H_{p^m}$. Si $m=0$, ce qui est le cas notamment si $p \in
F$, il suffit de trouver une matrice dans $\d{SO}(q,\dr Z_p)$ : la matrice
identit\'e convient. Il reste \`a traiter le cas $\nu_p(n)\neq 0$, pour
lequel on sait que $p \not \in F$. Donc on peut appliquer les deux lemmes
pr\'ec\'edents pour l'isomorphisme $\varphi_p$. Le th\'eor\`eme
est alors une cons\'equence du lemme suivant :

\begin{lemm}
Soient $p$ un nombre premier impair, $m \in \dr N$, $k \geq 5$ et $q'$ une forme
quadratique sur $\dr Q_p^k$ de la forme $q'(x_1,\ldots,x_k)=x_1 x_2 +
x_3x_4+q''(x_5,\ldots x_k)$.

Alors il existe $g \in \d{SO}(q',\dr Q_p)$, et $u \in \d{SO}(q',\dr Z_p)$ tel
que
$gug^{-1}$ appartient \`a $J_{p^m}$.
\end{lemm}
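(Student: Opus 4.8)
The plan is to exploit the two hyperbolic planes $\langle e_1,e_2\rangle$ and $\langle e_3,e_4\rangle$ split off by $q'$. Since the whole construction will take place inside the first four coordinates, the piece $q''$ plays no role: any linear map fixing $e_5,\dots,e_k$ and preserving $\d{span}(e_1,\dots,e_4)$ preserves $q'$ as soon as its restriction preserves $x_1x_2+x_3x_4$, because $\d{span}(e_1,\dots,e_4)$ and $\d{span}(e_5,\dots,e_k)$ are $q'$-orthogonal. I will therefore produce $gug^{-1}$ by conjugating a fixed $\dr Z_p$-integral unipotent $u$ mixing the two hyperbolic planes by a diagonal element $g$ of the split torus of the first plane.

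First I would write down $u\in\d{GL}(k,\dr Z_p)$ defined on the standard basis by $u(e_1)=e_1$, $u(e_2)=e_2-e_3$, $u(e_3)=e_3$, $u(e_4)=e_4+e_1$ and $u(e_j)=e_j$ for $j\geq 5$. Then $u(v)$ has coordinates $(x_1+x_4,\,x_2,\,x_3-x_2,\,x_4,\,x_5,\dots)$, and a one-line substitution gives $q'\circ u=q'$; moreover $u$ is unipotent, hence of determinant $1$, with all entries in $\{0,\pm 1\}$ and the same for $u^{-1}$, so $u\in\d{SO}(q',\dr Z_p)$. Next, for $a=p^{-m}$, I set $g=\d{diag}(a,a^{-1},1,\dots,1)$: it preserves $x_1x_2$, fixes the remaining coordinates, and has determinant $1$, so $g\in\d{SO}(q',\dr Q_p)$.

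Finally I would compute $gug^{-1}$. Conjugation by the diagonal matrix $g$ multiplies the $(i,j)$ entry of $u$ by $g_{ii}g_{jj}^{-1}$, so it keeps every entry in $\{0,\pm 1\}$ except the two off-diagonal entries produced by $u(e_4)=e_4+e_1$ and $u(e_2)=e_2-e_3$, which become $a$ in position $(1,4)$ and $-a$ in position $(3,2)$; the diagonal stays equal to $1$. Hence the supremum of the $p$-adic norms of the entries of $gug^{-1}$ equals $\max(1,|p^{-m}|_p)=p^m$, that is, $gug^{-1}\in J_{p^m}$; since $g\in\d{SO}(q',\dr Q_p)$ and $u\in\d{SO}(q',\dr Z_p)$, this proves the lemma, and with it, as explained in the text, Theorem~\ref{the:equiorth}.

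I do not expect a genuine obstacle here: the construction is completely explicit, and the only points needing (routine) care are checking that both $u$ and $g$ lie in the \emph{special} orthogonal group and that $u$ is $\dr Z_p$-integral together with its inverse, after which the size computation is immediate. This is also the elementary route, announced just after Lemma~\ref{rang2}, to the Cartan-type decomposition $\d{SO}(q,\dr Q_p)=\d{SO}(q,\dr Z_p)A^+\d{SO}(q,\dr Z_p)$ invoked earlier for the primes of $\mathcal P_2$.
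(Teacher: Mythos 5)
Your construction is correct and is essentially the paper's own proof: the paper likewise just exhibits an integral unipotent coupling the two hyperbolic planes (with off-diagonal entries $\pm1$ at positions $(2,3)$ and $(4,1)$) and conjugates it by the diagonal torus element $\d{diag}(p^{m},p^{-m},1,\dots,1)$ of the first plane, so that the off-diagonal entries become $\pm p^{-m}$ and the matrix lands in $J_{p^m}$. Your choice of $u$ and of the sign of the exponent in $g$ differs only cosmetically, and your verifications (preservation of $q'$, integrality of $u$ and $u^{-1}$, the size computation) are exactly the routine checks the paper leaves implicit.
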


\begin{proof}
Il suffit de prendre les matrices :
$$g = \begin{pmatrix} p^{m} &0&0&0&0\\
0&p^{-m}&0&0&0\\0&0&1&0&0\\ 0&0&0&1&0 \\ 0&0&0&0& I_{k-4}\end{pmatrix} \textrm{
et } u = \begin{pmatrix}
1&0&0&0&0\\
0&1&1&0&0\\0&0&1&0&0\\ -1&0&0&1&0\\ 0&0&0&0& I_{k-4}\end{pmatrix}$$
\end{proof}

Le th\'eor\`eme \ref{the:equiorth} est donc bien prouv\'e.
\end{proof}

De m\^eme que dans le cas unitaire, on obtient comme
corollaire un r\'esultat d'existence :

\begin{coro}\label{coro:existenceorth}
Soient $k \geq 5$, $Q \in \d{M}(k,\dr Q)$ une matrice sym\'etrique,
d\'efinie positive.
Alors il existe deux entiers $N$ et $n_0$ tels
qu'on a pour tout entier $n\geq n_0$ :

L'entier $n$ est premier \`a $N$ implique que $\mathcal S(n,Q,\dr Z)$ est non
vide.
\end{coro}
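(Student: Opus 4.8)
Pour $Q$ symétrique définie positive de rang $k \geq 5$, il existe $N$ et $n_0$ tels que $n \geq n_0$ et $n$ premier à $N$ impliquent $\mathcal{S}(n,Q,\dr Z) \neq \emptyset$.

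Stratégie de preuve pour ce corollaire.\emph{Plan de la preuve du corollaire \ref{coro:existenceorth}.} L'id\'ee est que ce corollaire d\'ecoule imm\'ediatement du th\'eor\`eme \ref{the:equiorth}, de la m\^eme mani\`ere que le th\'eor\`eme \ref{coro:existenceuni} (existence dans le cas unitaire) d\'ecoulait du th\'eor\`eme \ref{the:equiuni} d'\'equir\'epartition. Soit $q$ la forme quadratique associ\'ee \`a la matrice $Q$. On applique le th\'eor\`eme \ref{the:equiorth} \`a $q$ : il fournit un entier $N$ tel que, quand $n$ tend vers l'infini en restant premier \`a $N$, les ensembles $\G_n$ de matrices de $\d{SO}(q,\dr Q)$ de d\'enominateur $n$ s'\'equir\'epartissent dans $\d{SO}(q,\dr R)$ suivant la probabilit\'e de Haar $\mu$. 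En particulier, la preuve du th\'eor\`eme \ref{the:appl2} (qui est le c\oe ur de la preuve du th\'eor\`eme \ref{the:equiorth}) donne l'\'equivalent $\d{Card}(\G_n) \sim \frac{(\mu \otimes \la)(G_n \cap G_U)}{(\mu \otimes \la)(\GG)}$ quand $n \to \infty$, $n$ premier \`a $N$.

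Il suffit alors de v\'erifier que ce cardinal est non nul pour $n$ assez grand, ce qui r\'esulte du fait que le volume $(\mu \otimes \la)(G_n \cap G_U)$ tend vers l'infini. Ce dernier point a d\'ej\`a \'et\'e \'etabli dans la preuve du th\'eor\`eme \ref{the:equiorth} : on y a montr\'e que, pour $n$ premier \`a $N$, les ensembles compacts $H_n$ (donc aussi les $G_n$) sont deux \`a deux disjoints et que $G_n \cap G_U$ est non vide --- c'est l\`a qu'interviennent les lemmes \ref{rang2} et les deux lemmes suivants qui exhibent, via l'isomorphisme $\varphi_p$ et les matrices explicites $g$ et $u$, un \'el\'ement de $G_n$ de la forme $gug^{-1} \in G_n \cap G_U$. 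On peut donc appliquer le lemme \ref{lem:volume} \`a la suite des parties compactes $G_n \cap G_U$ (bi-$U$-invariantes et deux \`a deux distinctes par disjonction des $H_n$) : on obtient $(\mu \otimes \la)(G_n \cap G_U) \to +\infty$.

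Par cons\'equent, il existe $n_0$ tel que pour tout $n \geq n_0$ premier \`a $N$, on a $\frac{(\mu \otimes \la)(G_n \cap G_U)}{(\mu \otimes \la)(\GG)} \geq 1$, et l'\'equivalent ci-dessus force $\d{Card}(\G_n) \geq 1$ pour $n$ assez grand. Comme par d\'efinition $\G_n$ est l'image dans $\d{SO}(q,\dr R)$ de l'ensemble des $\frac{1}{n}M$ avec $M \in \mathcal{S}(n,Q,\dr Z)$, la non-vacuit\'e de $\G_n$ \'equivaut \`a celle de $\mathcal{S}(n,Q,\dr Z)$. Le couple $(N, n_0)$ ainsi construit convient.

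\emph{Point d\'elicat.} Il n'y a pas ici de difficult\'e conceptuelle nouvelle : tout le travail a \'et\'e fait dans les th\'eor\`emes \ref{the:appl2} et \ref{the:equiorth} et dans le lemme \ref{lem:volume}. Le seul point \`a soigner est de s'assurer que l'argument donnant la non-vacuit\'e de $G_n \cap G_U$ pour $n$ premier \`a $N$ (la construction des matrices $g$ et $u$ place par place) s'applique bien uniform\'ement en $n$, de sorte que la suite $(G_n \cap G_U)_{n \text{ premier \`a } N}$ soit effectivement form\'ee d'ensembles non vides et deux \`a deux distincts, condition requise pour invoquer le lemme \ref{lem:volume}.
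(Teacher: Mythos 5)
Votre argument est correct et suit exactement la d\'emarche implicite du texte : le corollaire d\'ecoule du th\'eor\`eme \ref{the:equiorth} via l'\'equivalent $\d{Card}(\G_n) \sim \frac{(\mu\otimes\la)(G_n\cap G_U)}{(\mu\otimes\la)(\GG)}$ fourni par le th\'eor\`eme \ref{the:appl2}, combin\'e au lemme \ref{lem:volume} qui force ce volume \`a tendre vers l'infini, donc $\G_n$ (et $\mathcal S(n,Q,\dr Z)$) \`a \^etre non vide pour $n$ assez grand premier \`a $N$. Rien \`a redire, c'est essentiellement la preuve attendue.
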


\subsection{Lien avec le genre}

Voil\`a l'\'enonc\'e qui exprime que pour des formes dans le m\^eme genre,
l'ensemble des d\'enominateurs de matrices rationnelles dans leur groupe
orthogonal sont les m\^emes, du moins pour des entiers suffisamment grands :

\begin{theo}\label{the:genre}
Soient $k \geq 3$, $q$ et $q'$ deux formes quadratiques d\'efinies positives
du m\^eme genre.

Alors, pour $n$ suffisamment grand, s'il existe une matrice rationnelle de
d\'enominateur $n$ dans $\d{SO}(q',\dr Q)$, il en existe une dans $\d{SO}(q,\dr
Q)$.
\end{theo}

\begin{proof}
On se place exactement dans le cadre de la preuve du th\'eor\`eme
\ref{the:equiorth} : on consid\`ere \`a nouveau le
groupe $\bG=\d{SO}(q)$. On pose
pour tout nombre premier $p$, $U_p=\d{SO}(q,\dr Z_p)$, et $U$ le produit sur $p$
des
$U_p$.

On d\'efinit encore, pour $p$ premier et $m$ entier, $\tilde H_{p^m}$ l'ensemble
des
matrices de $\d{SO}(q,\dr Q_p)$ telles que le maximum de la norme $p$-adique des
coefficients est $p^m$. Maintenant, pour un entier $n$, pour tout nombre
premier $p$,
on note $\nu_p(n)$ la valuation $p$-adique de $n$. On pose alors $H_n$ le
produit sur les $p$ premiers des $\tilde H_{p^{\nu_p(n)}}$. Ces ensembles $H_n$
sont bi-$U$-invariants et deux \`a deux disjoints.

\bigskip

Soit maintenant $n$ un entier tel qu'il existe une matrice $\gamma$ de
d\'enominateur $n$ dans $\d{SO}(q',\dr Q)$. Pour prouver le th\'eor\`eme,
selon la m\'ethode d\'ej\`a vue, il nous suffit de prouver qu'alors $G_n\cap
G_U$ est non vide. On note $Q$ et $Q'$ les matrices associ\'ees \`a $q$ et $q'$.
Soient $g_{\dr Q}$ la matrice rationnelle conjuguant $Q$ et $Q'$ et, pour tout
nombre premier $p$, $g_p$ la
matrice de $\d{GL}(k,\dr Z_p)$  conjuguant $Q$ et $Q'$. On note
$g$ l'\'el\'ement $(g_{\dr Q},(g_p)_{p \; \d{premier}})$ de $\d{GL}(k,\dr A)$
(ici,
$g_{\dr Q}$ est
vu comme un \'el\'ement de $\d{GL}(k,\dr Q)\subset \d{GL}(k,\dr R)$).
Consid\'erons
l'\'el\'ement $g \gamma g^{-1}$ de $\d{GL}(k,\dr A)$. Alors par
d\'efinition de $g$, c'est un \'el\'ement de $\d{SO}(q,\dr A)$. On veut
d\'emontrer qu'il est dans $G_n\cap G_U$.

Pour cela, commen\c cons par remarquer que pour tout nombre premier $p$, $g_p$
est dans
$\d{GL}(k,\dr Z_p)$. Ainsi on ne change pas la norme $p$-adique de $\gamma$
en le conjuguant par $g_p$. Donc l'\'el\'ement $g \gamma g^{-1}$ est
bien dans $G_n$.

Soit ensuite $\lambda$ un caract\`ere de $\Lambda_U$,
c'est-\`a-dire $U$ et
$\G$-invariant. On veut d\'emontrer que $\lambda(g \gamma g^{-1})$ vaut 1.
D\'efinissons sur $\d{SO}(q',\dr A)$ le caract\`ere $\lambda '$ par : pour tout
$h
\in \d{SO}(q',\dr A)$, $\lambda ' (h) = \lambda (g_{\dr Q} h g_{\dr Q}^{-1})$
(ici
$g_{\dr Q}$ est vu comme
l'\'el\'ement rationel de $\d{GL}(k,\dr A)$ dont chaque composant dans
$\d{GL}(k,\dr R)$ et les $\d{GL}(k,\dr Q_p)$ est la matrice $g_{\dr Q}$). Comme
$g_{\dr
Q}$ est une
matrice rationnelle, $\lambda'$ est $\d{SO}(q',\dr Q)$-invariant.
Or on a $\lambda(g \gamma g^{-1})= \lambda' (g_{\dr Q}^{-1}g
\gamma g^{-1}g_{\dr Q})$. De plus, par construction, $g^{-1}g_{\dr Q}$ est un
\'el\'ement
de $\d{SO}(q',\dr A)$ et $\gamma$ est un \'el\'ement de $\d{SO}(q',\dr Q)$.
Donc, on a d\'emontr\'e le r\'esultat voulu :
$$\lambda(g \gamma g^{-1})=
\lambda' (g_{\dr Q}^{-1}g\gamma g^{-1}g_{\dr Q})=\lambda'(\gamma)=1$$

Cela termine la preuve : il suffit d'appliquer le th\'eor\`eme \ref{the:appl2}.
\end{proof}

\subsection{Application \`a la forme canonique}

Dans cette section, on applique nos r\'esultats au cas de la forme quadratique
canonique. On note $q_k$ la forme quadratique canonique $x_1^2 + \ldots
+x_k^2$ sur $\dr Z^k$. La strat\'egie est la m\^eme, mais la diff\'erence
notable est qu'on sait construire en rang $3$ des matrices rationnelles de tout
d\'enominateur impair dans $\d{SO}(q_3,\dr Q)$, donc nous n'avons plus de
probl\`eme avec le groupe $G_U$.

\begin{coro}\label{coro:appl}
Soient $k \geq 3$ et $\mu$
la probabilit\'e de Haar sur $\d{SO}(q_k,\dr R)$. Pour tout entier $n$,
on note $\G_n$ l'ensemble des
matrices de $\d{SO}(q_k,\dr Q)$ de d\'enominateur $n$.

Alors quand $n$ tend vers l'infini et est impair, les $\G_n$
s'\'equir\'epartissent dans $\d{SO}(q_k,\dr R)$, c'est-\`a-dire :

$$\frac{1}{\d{Card} (\G_n)} \sum_{\g \in
\G_n} \delta_{\g} \xrightarrow[n \textrm{ impair}]{n \to \infty} \mu \; .$$
\end{coro}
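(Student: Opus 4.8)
Le corollaire \ref{coro:appl} est un cas particulier presque immédiat de la stratégie développée pour le théorème \ref{the:genre}, auquel il suffit d'ajouter le fait bien connu qu'en rang $3$ on sait produire explicitement des matrices rationnelles de tout dénominateur impair dans $\d{SO}(q_3,\dr Q)$. Voici comment je procéderais.

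Le plan est d'appliquer le théorème \ref{the:appl2} au $\dr Q$-groupe $\bG=\d{SO}(q_k)$, avec $\G=\d{SO}(q_k,\dr Q)$, le compact ouvert $U=\prod_p\d{SO}(q_k,\dr Z_p)$, et la suite $H_n=\prod_p\tilde H_{p^{\nu_p(n)}}$ où $\tilde H_{p^m}$ est l'ensemble des matrices de $\d{SO}(q_k,\dr Q_p)$ dont le maximum des normes $p$-adiques des coefficients vaut $p^m$, exactement comme dans les preuves des théorèmes \ref{the:equiorth} et \ref{the:genre}. Pour $n$ impair, ces $H_n$ sont bi-$U$-invariants et deux à deux disjoints, donc les $G_n=G^\infty\times H_n$ le sont aussi ; il reste donc seulement à vérifier que pour tout $n$ impair les ensembles $G_n\cap G_U$ sont deux à deux distincts, ce qui sera acquis dès qu'on aura montré qu'ils sont non vides, puisque la distinction des $G_n$ entraîne alors celle des $G_n\cap G_U$.

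Pour montrer que $G_n\cap G_U\neq\emptyset$ quand $n$ est impair, j'exhiberais, pour chaque premier impair $p$ et $m=\nu_p(n)$, un couple $(g_p,u_p)\in\d{SO}(q_k,\dr Q_p)\times\d{SO}(q_k,\dr Z_p)$ avec $g_pu_pg_p^{-1}\in\tilde H_{p^m}$ ; comme dans le théorème \ref{the:equiorth}, l'élément $gug^{-1}$ ainsi construit (avec $g_p=\d{Id}$ et $u_p=\d{Id}$ pour les $p$ qui ne divisent pas $n$) est automatiquement tué par tout caractère de $\La_U$ et tombe donc dans $G_n\cap G_U$. Pour obtenir localement de tels couples, deux routes sont possibles : soit invoquer le lemme \ref{rang2} (valable pour $k\ge 5$ et les $p$ impairs où $q_k$ se diagonalise en coefficients inversibles, c'est-à-dire tous les $p$ impairs pour la forme canonique), puis le lemme explicite final de la preuve du théorème \ref{the:equiorth} ; soit, et c'est la voie qui marche aussi en rang $3$ et $4$, utiliser directement la construction classique de matrices rationnelles de dénominateur $n$ dans $\d{SO}(q_k,\dr Q)$ pour tout $n$ impair — par exemple en rang $3$ via la paramétrisation par les quaternions de Lipschitz (une matrice de rotation associée à un quaternion entier primitif de norme $n$ a pour dénominateur $n$), et en rang quelconque $k\ge 3$ en complétant par l'identité sur les variables restantes ou en itérant sur des blocs. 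Une telle matrice $\gamma\in\d{SO}(q_k,\dr Q)$ de dénominateur $n$ a, par définition du dénominateur, ses projections $p$-adiques dans $\tilde H_{p^{\nu_p(n)}}$ pour tout $p$, donc $\gamma\in\G\cap G_n$ : cela prouve d'un coup que $G_n\cap G_U\supseteq\{\gamma\}\neq\emptyset$ et même que $\G_n\neq\emptyset$.

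Une fois la non-vacuité (donc la distinction) des $G_n\cap G_U$ établie, le théorème \ref{the:appl2} s'applique et donne l'équidistribution de la projection de $\G_n=\G\cap G_n$ dans $G^\infty=\d{SO}(q_k,\dr R)$ vers $\mu$, ce qui est exactement l'énoncé du corollaire (en identifiant comme dans l'introduction $\G_n$ avec l'ensemble des matrices rationnelles de dénominateur $n$ dans $\d{SO}(q_k,\dr Q)$ via $M\mapsto\frac1nM$). Le seul point qui demande un peu de soin est la construction explicite des matrices de dénominateur $n$ impair en rang $3$ (et le recollement au rang supérieur), mais c'est un fait tout à fait classique ; tout le reste est une répétition littérale des arguments des deux théorèmes précédents de cette section.
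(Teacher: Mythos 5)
Votre proposition est correcte et suit essentiellement la démonstration du papier : même application du théorème \ref{the:appl2} avec $\bG=\d{SO}(q_k)$, $U=\prod_p\d{SO}(q_k,\dr Z_p)$ et les $H_n$ produits des $\tilde H_{p^{\nu_p(n)}}$, la non-vacuité de $G_n\cap G_U$ étant obtenue, comme dans le lemme \ref{presence} du texte, par la construction quaternionique d'une matrice rationnelle de dénominateur $n$ impair dans $\d{SO}(q_3,\dr Q)$ que l'on complète par l'identité. Votre variante via le lemme \ref{rang2} n'est qu'une option supplémentaire (limitée à $k\geq 5$) ; la voie principale que vous retenez est exactement celle du papier.
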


\begin{proof}
Commen\c cons par le lemme suivant :

\begin{lemm}\label{presence}
Pour tout entier impair $n$, il existe une matrice de d\'enominateur $n$ dans
$\d{SO}(q_k,\dr Q)$.
\end{lemm}

\begin{proof}
Il suffit de le faire pour $\d{SO}(q_3,\dr Q)$. Soit alors $n$ un entier impair,
et
$x=a+ib+jc+kd$ un quaternion \`a coefficients entiers premiers entre eux de
norme $n$.

Consid\'erons la matrice de l'action de $x$ par conjugaison sur les quaternions
purs. Elle s'\'ecrit :
$$\frac{1}{n}\begin{pmatrix}
a^2+b^2-c^2-d^2 & 2(bc-ad) & 2(ac+bd)\\
2(ad+bc) & a^2-b^2+c^2-d^2 & 2(cd-ab)\\
2(bd-ac) & 2(ab+cd) & a^2-b^2-c^2+d^2\\
\end{pmatrix}
$$

On v\'erifie alors qu'elle est bien de d\'enominateur $n$ (il n'y a pas de
simplification possible). De plus, par construction c'est une matrice de
$\d{SO}(q_3,\dr Q)$.
\end{proof}

On en d\'eduit (avec les notations des preuves pr\'ec\'edentes) que pour tout
$n$ impair, $G_n\cap G_U$ est non vide, car il contient une matrice rationnelle.
On en d\'eduit imm\'ediatement le
r\'esultat du corollaire, comme application du th\'eor\`eme \ref{the:appl2}.
\end{proof}

\section{Volume des doubles classes dans la d\'ecomposition de
Cartan}\label{partie:volume}

Nous donnons ici une preuve du lemme \ref{lem:volume}. Soient donc $\bG$ un
groupe alg\'ebrique d\'efini sur $K$, $U$ un sous-groupe compact ouvert
de $G^f$ et $\la$ la mesure de Haar sur $G^f$ telle que $\la(U)=1$. On veut
prouver que si $g$ tend vers l'infini dans $G^f$, $\la(UgU)$ tend aussi vers
l'infini. Remarquons que localement ce r\'esultat est bien connu et on sait
exprimer le volume des doubles classes modulo un sous-groupe d'Iwahori tr\`es
simplement en
fonction de la longueur pond\'er\'ee sur le groupe de Weyl affine. Cependant
ici $U$ est un sous-groupe compact du groupe des ad\`eles donc la projection de
$U$ sur
presque toutes les places est un sous-groupe compact maximal dans lequel
un sous-groupe d'Iwahori est d'indice de plus en plus grand. Il nous faut donc
contr\^oler ce
ph\'enom\`ene. C'est l'objet du lemme \ref{vol} qui ne pr\'esente pas de
difficult\'es mais n\'ecessite l'introduction du vocabulaire d'\'etude
des groupes r\'eductifs sur
les corps locaux.

\bigskip

On remarque tout d'abord l'\'egalit\'e :
$$\la(UgU)=\d{Card}\left(UgU/U\right)=\d{Card}\left(U/(U\cap
gUg^{-1})\right)\,.$$
De
plus, on peut changer de sous-groupe compact ouvert en vertu du lemme suivant :

\begin{lemm}\label{lem:changer}
Si $V \subset U$ est un autre sous-groupe compact ouvert de $G^f$, il existe
une constante $c> 1$ telle que pour tout $g$ dans $G$, on a :
$$\frac{1}{c}\,\d{Card}(VgV) \leq \d{Card}(UgU) \leq {c}\,\d{Card}(VgV)\;.$$
\end{lemm}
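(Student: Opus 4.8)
The claim is a standard commensurability-type estimate for double cosets: passing from one compact open subgroup $U$ to a smaller one $V\subset U$ only changes the cardinality of $UgU/U$ by a bounded multiplicative factor. The plan is to prove the two inequalities separately, both by elementary index comparisons, and then take $c$ to be the maximum of the two constants that appear.

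\begin{proof}
Puisque $V \subset U$, l'indice $[U:V]$ est fini ; notons-le $m$. Montrons d'abord la majoration $\d{Card}(UgU) \leq m^2 \,\d{Card}(VgV)$. Choisissons des repr\'esentants $u_1,\ldots,u_m$ des classes \`a gauche $U/V$, de sorte que $U = \coprod_i u_i V$. Alors $UgU = \coprod_{i} u_i V g U$ et, en d\'ecomposant \'egalement \`a droite, $VgU = \coprod_j V g u_j V$ si bien que $UgU$ est r\'eunion d'au plus $m^2$ doubles classes de la forme $V (u_i^{-1}) \cdot (u_i g u_j) \cdot V$ --- plus pr\'ecis\'ement on a $\d{Card}(UgU/V) \leq m^2 \max_{i,j} \d{Card}(V u_i g u_j V / V)$. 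Or pour chaque paire $(i,j)$, le volume $\la(V u_i g u_j V) = \la(V g V)$ ne d\'epend pas de $i,j$ : en effet $\la$ est invariante \`a gauche et \`a droite, et $u_i,u_j \in U$ ont une norne ad\'elique triviale, donc $\la(V u_i g u_j V) = \la(u_i^{-1} V u_i \cdot g \cdot u_j V u_j^{-1})$ a le m\^eme volume que $\la(VgV)$ (les sous-groupes $u_i^{-1}Vu_i$ et $VgV$ ont tous le volume de $V$). On conclut $\la(UgU) \leq m^2 \la(VgV)$, c'est-\`a-dire $\d{Card}(UgU) \leq m^2 \d{Card}(VgV)$ apr\`es normalisation par $\la(V)$ --- attention, ici les cardinaux sont compt\'es modulo des sous-groupes diff\'erents, mais l'\'egalit\'e $\la(UgU) = \d{Card}(UgU/U)\,\la(U) = \d{Card}(UgU/V)\,\la(V)$ permet de tout ramener aux volumes.

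Pour la minoration, on observe simplement que $VgV \subset UgU$, donc $\la(VgV) \leq \la(UgU)$, ce qui donne $\d{Card}(VgV) = \la(VgV)/\la(V) \leq \la(UgU)/\la(V) = [U:V]\,\la(UgU)/\la(U) = m\,\d{Card}(UgU)$. En posant $c = m^2$, on obtient les deux in\'egalit\'es annonc\'ees, uniform\'ement en $g$.
\end{proof}

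Le point d\'elicat est de bien distinguer les trois quantit\'es en jeu --- $\d{Card}(UgU/U)$, $\d{Card}(UgU/V)$, $\la(UgU)$ --- et de tout convertir en volumes, qui sont les seules quantit\'es se comportant bien par multiplication \`a gauche et \`a droite ; une fois ce dictionnaire en place les estim\'ees sont purement combinatoires et le r\'esultat est uniforme en $g$ parce que $[U:V]$ ne d\'epend pas de $g$.
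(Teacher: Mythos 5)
Your lower bound is correct, and in fact slightly cleaner than the paper's argument: the inclusion $VgV\subset UgU$ plus the conversion between volumes and coset counts immediately gives $\d{Card}(VgV/V)\leq [U{:}V]\,\d{Card}(UgU/U)$, uniformly in $g$. The upper bound, however, rests on a false step. After writing $UgU=\bigcup_{i,j}u_iVgu_jV$ you claim that $\la(Vu_igu_jV)=\la(VgV)$ independently of $(i,j)$, justified by bi-invariance of $\la$ and the fact that the conjugates $u_i^{-1}Vu_i$, $u_jVu_j^{-1}$ have the same volume as $V$. What invariance actually gives is $\la(Vu_igu_jV)=\la\bigl((u_i^{-1}Vu_i)\,g\,(u_jVu_j^{-1})\bigr)$, and the volume of a double coset $V'gV''$ is $\la(V')\,[V''{:}V''\cap g^{-1}V'g]$: it depends on the intersection, not only on $\la(V')$ and $\la(V'')$. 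The asserted equality is simply false in general: take $g=e$ and $u_i u_j\notin V$, so that $Vu_igu_jV=Vu_iu_jV$ has volume at least $2\la(V)$ while $\la(VgV)=\la(V)$; for a non-degenerate example take $U=\d{SL}_2(\dr Z_p)$, $V$ an Iwahori subgroup, $g$ a diagonal matrix and $u_j$ the Weyl element, where the Iwahori double coset volumes $q^{\ell(w)}$ change because the affine length changes by $\pm 1$. So the chain of inequalities in your upper bound breaks at that point.

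The repair is small: the trouble comes from cutting the right-hand copy of $U$ into \emph{left} cosets $u_jV$, which produces pieces $u_i\,(Vgu_jV)$ whose volume genuinely depends on $u_j$. Cut it instead into right cosets, $U=\coprod_j Vu'_j$; then $UgU=\bigcup_{i,j}u_i\,(VgV)\,u'_j$, each piece is a two-sided translate of $VgV$ itself, hence of volume exactly $\la(VgV)$ by bi-invariance of the Haar measure (the group $G^f$ is unimodular), and you get $\la(UgU)\leq [U{:}V]^2\,\la(VgV)$, which after your volume/cardinal bookkeeping gives the desired inequality with a constant uniform in $g$. Alternatively one can avoid volumes altogether, as the paper does, by the pure coset count $\d{Card}(UgU/U)=\d{Card}\bigl(U/(U\cap gUg^{-1})\bigr)\leq\d{Card}\bigl(U/(V\cap gVg^{-1})\bigr)=[U{:}V]\,\d{Card}(VgV/V)$. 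With either correction the rest of your argument, including the careful distinction between $\d{Card}(\cdot/U)$, $\d{Card}(\cdot/V)$ and $\la$, goes through.
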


\begin{proof}
Soit $c$ l'indice $[U:V]$. D'une part, on a $$\d{Card } UgU/U \geq \d{Card }
VgU/U=\d{Card } V/(V\cap gUg^{-1});.$$ Comme $V$ est d'indice $c$
dans $U$, on a $\d{Card } UgU/U \geq \frac{1}{c} \,\d{Card } V/(V\cap
gVg^{-1})$, ce qui prouve la premi\`ere in\'egalit\'e.
D'autre part, on a :$$\d{Card }UgU/U  =  \d{Card } U/(U\cap
gUg^{-1}) \leq  \d{Card } U/(V\cap gVg^{-1})\;.$$ A nouveau, par
d\'efinition de $c$, on a $\d{Card }UgU/U \leq c\, \d{Card }
V/(V\cap gVg^{-1})$.
\end{proof}

Notons $\Omega$ l'ensemble des places non-archim\'ediennes de $K$, fixons une
place $\omega\in \Omega$ et raisonnons dans le groupe $\bG(K_\omega)$. Le corps
$K_\omega$ est une extension finie de $\dr Q_p$ pour un certain nombre premier
$p$ ; on
note $q$ le cardinal de son corps r\'esiduel. Pour les r\'esultats sur les
groupes sur les corps locaux, nous nous r\'ef\'erons \`a
l'article de J. Tits (\cite{Tits}), dont nous reprenons les notations.

R\'esumons les objets fournis par la th\'eorie des groupes $p$-adiques dont
nous aurons besoin : on dispose dans $\bG(K_\omega)$ d'un tore maximal
$K_\omega$-d\'eploy\'e $T_\omega$, et on note $N_\omega$ son normalisateur et
$Z_\omega$ son
centralisateur (ce sont des $K_\omega$-sous-groupes de $\bG$).

De plus on d\'efinit les objets suivants :

\begin{enumerate}
\item Les groupes $X^*=\d{Hom}_{K_\omega}(T_\omega,\d{Mult})$ et
$X_*=\d{Hom}_{K_\omega}(\d{Mult},
T_\omega)$ des caract\`eres et co-caract\`eres
d\'efinis
sur $K_\omega$ du tore, ainsi que
$X^*(Z)=\d{Hom}_{K_\omega}(Z_\omega,\d{Mult})$.
\item L'espace vectoriel $V=\dr R \otimes X_*$, et le syst\`eme de racines
restreintes $\Phi \subset X^*$ associ\'e au tore $T_\omega$.
\item Une application $\nu$ de $N_\omega(K_\omega)$ dans le groupe des
transformations
affines d'un espace $A$ sous $V$, d\'efinie en 1.2 de \cite{Tits} comme l'unique
extension de l'application de $Z_\omega(K_\omega)$ v\'erifiant (en notant
$v_\omega$ la valuation associ\'ee \`a $\omega$) : $$\d{pour tous} \; z \in
Z_\omega(K_\omega)\textrm{ et }\chi \in X^*(Z)\textrm{, on
a }\chi(\nu(z))=v_\omega(\chi(z))\; .$$
\item Le groupe de Weyl fini ${}^v W = N_\omega(K_\omega)/Z_\omega(K_\omega)$
et le groupe $\tilde{W}= N_\omega(K_\omega)/\d{ker}(\nu)$ qui contient le groupe
de Weyl affine $W$ comme sous-groupe distingu\'e d'indice fini. On identifie
$\tilde W$ comme un sous-groupe des transformations affines de $V$ en
choisissant dans $A$ un point sp\'ecial comme origine. ${}^v W$ est alors
l'ensemble des automorphismes de $\tilde W$ fixant l'origine.
\item Un choix d'un ensemble $\Phi^+$ de racines positives dans $\Phi$, et donc
une chambre $C$ contenant 0 dans $V$ d\'efinie comme l'ensemble des points $v$
de
$V$ tels que pour tout $\chi\in \Phi^+$, on a $\chi (v)\geq 0$.
\item La chambre vectorielle $Y^+ =\dr R^+ \otimes C$ de $V$ et un sous-groupe
compact ouvert $U_\omega$ de $\bG(K_\omega)$ (le fixateur du point sp\'ecial)
tels que
$\bG(K_\omega)$ est l'union des doubles classes $U_\omega a U_\omega$ pour $a
\in
Z_\omega^+=\nu^{-1}(Y^+)$ (d\'ecompostion de Cartan)
\end{enumerate}

De plus, si $n \in Z_\omega$ est tel que $\nu(n)$ est dans ${}^v W$, alors $n$
appartient
\`a $U_\omega$.

Enfin, on peut d\'efinir sur $\tilde W$ une fonction longueur pond\'er\'ee \`a
valeur enti\`ere (voir le paragraphe 3.3 de \cite{Tits}) de la fa\c con suivante
: on note $(r_i)$ les sym\'etries de $W$ associ\'ees \`a un
syst\`eme de racines simples dans $\Phi^+$. A chacun de ces \'el\'ements est
associ\'e un entier non nul $d(r_i)$. On \'ecrit tout \'el\'ement $w \in \tilde
W$ sous la
forme $w=r_{i_1} \ldots r_{i_l} w_0$ o\`u $w_0(C)=C$ et $r_{i_1} \ldots r_{i_l}$
est un mot r\'eduit dans $W$. On pose alors $l(w)=d(r_{i_1}) + \ldots
+d(r_{i_l})$.

Alors, d'apr\`es la section 3.3 de \cite{Tits} (voir aussi \cite{Gross}, 7.3),
pour tout $a \in Z_\omega^+$, en notant $\nu(a)=w$, on a : $$
\d{Card}\left(U_\omega aU_\omega/U_\omega\right) = \frac{\sum_{y \in {}^v W w
{}^vW}
q^{l(y)}}{\sum_{y \in {}^vW} q^{l(y)}}$$

On tire le r\'esultat suivant de cette formule :

\begin{lemm}\label{vol}
\begin{enumerate}
\item si $a \in Z_\omega^+$ n'est pas dans $U_\omega$, on a :
$$\d{Card}\left(U_\omega aU_\omega/U_\omega\right)
\geq p\,.$$
\item si $a_n$ tend vers l'infini dans $Z_\omega^+$, on a
$\d{Card}\left(U_\omega a_nU_\omega/U_\omega\right) \to + \infty$.
\end{enumerate}
\end{lemm}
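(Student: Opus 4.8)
\emph{Strat\'egie de preuve.} Le plan est de tirer les deux assertions directement de la formule de d\'enombrement ci-dessus, appliqu\'ee \`a $w=\nu(a)$ :
$$\d{Card}\left(U_\omega aU_\omega/U_\omega\right)=\frac{\sum_{y\in{}^vW w{}^vW}q^{l(y)}}{\sum_{y\in{}^vW}q^{l(y)}}\;.$$
Comme $a$ appartient \`a $Z_\omega^+=\nu^{-1}(Y^+)$ et que $\nu$ envoie $Z_\omega(K_\omega)$ sur des translations (car $\chi(\nu(z))=v_\omega(\chi(z))$ pour $\chi\in X^*(Z)$), l'\'el\'ement $w=\nu(a)$ est la translation $t_\lambda$ associ\'ee \`a un \'el\'ement $\lambda$ du c\^one dominant $Y^+$. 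Une telle translation appartient \`a ${}^vW$ (les \'el\'ements de $\tilde W$ fixant l'origine) si et seulement si $\lambda=0$ ; comme par ailleurs $\nu(a)\in{}^vW$ entra\^ine $a\in U_\omega$, on voit que $a\notin U_\omega$ implique $\lambda\neq 0$.

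La minoration repose sur l'identit\'e classique suivante dans le groupe de Weyl affine \'etendu : pour $\lambda$ dominant et $v\in{}^vW$, on a $l(v\,t_\lambda)=l(v)+l(t_\lambda)$, autrement dit $t_\lambda$ est de longueur minimale dans la classe \`a gauche ${}^vW t_\lambda$ (\`a la Iwahori--Matsumoto ; pour la longueur pond\'er\'ee de la section 3.3 de \cite{Tits} la combinatoire de Coxeter est inchang\'ee, une \'ecriture r\'eduite de $v\,t_\lambda$ s'obtenant par concat\'enation d'une \'ecriture r\'eduite de $v$ et d'une de $t_\lambda$). Comme ${}^vW t_\lambda\subset {}^vW t_\lambda{}^vW$, on en d\'eduit aussit\^ot
$$\sum_{y\in{}^vW t_\lambda{}^vW}q^{l(y)}\;\geq\;\sum_{v\in{}^vW}q^{l(v\,t_\lambda)}\;=\;\sum_{v\in{}^vW}q^{l(v)+l(t_\lambda)}\;=\;q^{l(t_\lambda)}\sum_{v\in{}^vW}q^{l(v)}\;,$$
soit $\d{Card}\left(U_\omega aU_\omega/U_\omega\right)\geq q^{l(t_\lambda)}$.

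Les deux points s'en d\'eduisent. Pour le premier : si $a\notin U_\omega$, alors $\lambda\neq 0$, donc $l(t_\lambda)\geq 1$ puisque $l$ est \`a valeurs dans $\dr N$ et que les poids $d(r_i)$ valent au moins $1$ ; ainsi $\d{Card}(U_\omega aU_\omega/U_\omega)\geq q\geq p$, car $q$ est le cardinal du corps r\'esiduel de $K_\omega$, extension finie de $\dr Q_p$, donc une puissance de $p$. Pour le second : si $a_n$ tend vers l'infini dans $Z_\omega^+$, le noyau de la restriction de $\nu$ \`a $Z_\omega(K_\omega)$ \'etant compact, les translations $t_{\lambda_n}=\nu(a_n)$ sortent de toute partie finie de $\tilde W$ ; comme $l$ a toutes ses sous-parties $\{y\in\tilde W : l(y)\leq L\}$ finies, il vient $l(t_{\lambda_n})\to+\infty$, d'o\`u $\d{Card}(U_\omega a_nU_\omega/U_\omega)\geq q^{l(t_{\lambda_n})}\to+\infty$.

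Le seul point demandant un peu de soin est l'identit\'e $l(v\,t_\lambda)=l(v)+l(t_\lambda)$ pour $\lambda$ dominant, dans le cadre de la longueur pond\'er\'ee ; elle est standard (voir la section 3.3 de \cite{Tits} ainsi que \cite{Gross}, 7.3) et traduit g\'eom\'etriquement le fait qu'une galerie minimale de $C$ vers $v\,t_\lambda\, C$ passe par $vC$ d\`es que $\lambda$ est dominant.
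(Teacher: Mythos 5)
Votre preuve est correcte et repose sur le m\^eme sch\'ema que celle du texte : la formule de d\'enombrement, puis la minoration de la somme sur la double classe ${}^vW\nu(a)\,{}^vW$ par la somme sur une classe lat\'erale gr\^ace \`a une additivit\'e de longueurs, d'o\`u $\d{Card}(U_\omega aU_\omega/U_\omega)\geq q^{l(\cdot)}$. La diff\'erence porte sur le point cl\'e du premier \'enonc\'e : vous utilisez que $\nu(a)$ est une translation $t_\lambda$ de vecteur dominant et l'identit\'e classique de type Iwahori--Matsumoto $l(v\,t_\lambda)=l(v)+l(t_\lambda)$ pour $v\in{}^vW$, ce qui donne imm\'ediatement $l(t_\lambda)\geq 1$ d\`es que $a\notin U_\omega$ (une translation non triviale ne stabilise pas $C$, donc n'est pas de longueur nulle, et les poids $d(r_i)$ valent au moins $1$). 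Le texte proc\`ede autrement : il prend le repr\'esentant $w_0$ de longueur minimale de la classe ${}^vW\nu(a)$, pour lequel les longueurs s'additionnent, mais doit alors traiter \`a part, par conjugaison par une sym\'etrie de ${}^vW$, le cas g\^enant o\`u $w_0$ stabilise $C$ sans appartenir \`a ${}^vW$. Votre route \'evite donc cette disjonction de cas, au prix d'invoquer l'additivit\'e pour les translations dominantes -- fait standard, et votre remarque sur la concat\'enation des mots r\'eduits ram\`ene correctement le cas pond\'er\'e au cas non pond\'er\'e (en utilisant implicitement que ${}^vW$ est contenu dans $W$, l'origine \'etant sp\'eciale). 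Seule r\'eserve, purement de convention : selon les normalisations de \cite{Tits} (action \`a gauche ou \`a droite), l'additivit\'e peut concerner $l(t_\lambda v)$ plut\^ot que $l(v\,t_\lambda)$ ; comme la double classe contient les deux classes lat\'erales, l'argument est inchang\'e. Enfin, pour le second point, votre justification (compacit\'e du noyau de $\nu$ restreint \`a $Z_\omega(K_\omega)$ et finitude des sous-niveaux de $l$ dans $\tilde W$) est correcte et explicite l'argument, plus elliptique, du texte ; votre minoration $q^{l(t_{\lambda_n})}$ y est m\^eme plus forte que n\'ecessaire, le seul terme $y=t_{\lambda_n}$ du num\'erateur suffisant puisque le d\'enominateur est constant.
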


\begin{proof}
Commen\c cons par le second point : si $a_n$ tend vers l'infini dans $\dr
Z_\omega^+$, alors la longueur $l(\nu(a))$ aussi, ce qui suffit.

Pour le premier point : soit $a$ un \'el\'ement de $Z_\omega^+$ qui n'est pas
dans
$U_\omega$. Consid\'erons $w_0$ le mot le plus court dans ${}^vW \nu(a)$.

Si $w_0$ ne
fixe pas $C$, alors $l(w_0) \geq 1$ et pour tout $w\in {}^vW$, on a par
d\'efinition $l(ww_0)=l(w)+l(w_0)$. On en d\'eduit que
$\d{Card}\left(U_\omega aU_\omega/U_\omega\right)$ est plus grand que
$q^{l(w_0)}$, donc que
$p$.

Si $w_0$ fixe $C$, alors $w_0$ n'est pas dans ${}^v W$ (sinon $a$
appartient \`a $U_\omega$). $w_0^{-1}$ envoie donc l'origine de $V$ sur un
autre point $x$. Or il existe dans ${}^v W$ une sym\'etrie $s$ qui envoie $x$
sur un point n'appartenant pas \`a $C$. Alors, le point $w_0 s w_0^{-1}$ est
dans $W$ car $W$ est distingu\'e dans $\tilde W$, mais pas dans ${}^v W$, car
l'origine est envoy\'ee sur un point en dehors de $C$, donc n'est pas fix\'ee.

Donc $w_0s$ s'\'ecrit sous la forme $r w_0$, o\`u $r$ est dans $W$ mais pas
dans ${}^v W$. En raisonnant comme pr\'ecedemment, mais pour l'ensemble ${}^v
Wrw_0$, on obtient aussi dans ce cas que
$\d{Card}\left(U_\omega aU_\omega/U_\omega\right)$ est
plus
grand que $p$.
\end{proof}

Maintenant que nous disposons de tous ces objets, le lemme \ref{lem:volume}
peut \^etre prouv\'e :

\begin{proof}[Preuve du lemme \ref{lem:volume}]
On fixe maintenant le compact ouvert $U_0= \prod_{\omega \in
\Omega} U_\omega$.
Consid\'erons une suite $(g_n)$ d'\'el\'ement de $G^f$ qui
tend vers l'infini. Chaque $g_n$ s'\'ecrit comme une suite
$(g_{\omega,n})_{\omega\in\Omega}$ dans le produit $
\prod_{\omega\in\Omega} \bG(K_\omega)$.
On d\'ecompose toutes les coordon\'ees dans la d\'ecomposition de Cartan :
$g_{\omega,n} \in U_\omega a_{\omega,n} U_\omega\textrm{, avec }a_{\omega,n}\in
Z^+_\omega$.

Soit $A$ un entier positif. Alors on veut prouver qu'il existe $N$
tel que pour tout $n\geq N$, $\lambda(U_0 g_n U_0)\geq A$.
Pour tout \'el\'ement $g_n$ tel qu'il
existe une place $\omega_p$ au dessus d'un nombre premier $p \geq A$ avec
$a_{\omega_p,n}\not\in U_{\omega_p}$, on a d'apr\`es le premier
point du lemme \ref{vol} :
\begin{flushleft}$\lambda(U_0 g_n U_0)=\prod_{\omega\in\Omega}
\d{Card}\left(U_\omega a_{\omega,n}U_\omega/U_\omega\right) \geq$\\
\end{flushleft}
\begin{flushright}
$\d{Card}\left(U_{\omega_p}a_{\omega_p,n}U_{\omega_p}/U_{\omega_p}\right)\geq
p \geq A
\;.$\end{flushright}

Par ailleurs, consid\'erons la sous-suite $(g_n)_{n\in S}$ telle que pour tout
$n\in S$ et pour toute place $\omega$ au dessus d'un nombre premier $p\geq A$,
l'\'el\'ement $a_{\omega,n}$ appartient \`a $U_\omega$.
Si cette sous-suite est finie, le r\'esultat voulu est prouv\'e. Sinon,
comme $(g_n)$ sort de tout compact, il existe une place $\omega_p$ au dessus
 d'un nombre premier $p\leq A$ tel
que la suite $(a_{\omega_p,n})_{n\in S}$ tend vers l'infini dans
$Z^+_{\omega_p}$. Alors, d'apr\`es le deuxi\`eme point du lemme \ref{vol},
on a :
\begin{flushleft}$\lambda(U_0 g_n U_0)=\prod_{\omega\in\Omega}
\d{Card}\left(U_\omega a_{\omega,n}U_\omega/U_\omega\right)
\geq$\\ \end{flushleft} \begin{flushright}
$\d{Card}\left(U_{\omega_p}a_{{\omega_p},n}U_{\omega_p}/U_{\omega_p}\right)
\xrightarrow{n\to \infty} +\infty\;.$\end{flushright}

Ainsi pour $n$ suffisamment grand, le volume $\lambda(U_0 g_n U_0)$ est de toute
fa\c
con sup\'erieur \`a $A$.
Cela prouve le r\'esultat pour le groupe compact ouvert $U_0$. Or on a vu avec
le lemme \ref{lem:changer} que cela suffisait.
\end{proof}

\bibliographystyle{cdraifplain}
\bibliography{biblio}

\end{document}